\newcommand{\COLORON}{0}
\newcommand{\NOTESON}{1}
\newcommand{\Debug}{0}
\newcommand{\comment}[1]{}
\newcommand{\COMMENT}[1]{}
\definecolor{darkgray}{rgb}{0.3,0.3,0.3}
\newcommand{\defi}[1]{{\color{darkgray}\emph{#1}}}
\newtheorem{proposition}{Proposition}[section]
\newtheorem{theorem}[proposition]{Theorem}
\newtheorem{corollary}[proposition]{Corollary}
\newtheorem{lemma}[proposition]{Lemma}
\newtheorem{observation}[proposition]{Observation}
\newtheorem{examp}[proposition]{Example}%[section]
\newcommand{\FIG}{0}
\newcommand{\note}[1]{ 

\hspace*{-30pt}
	{\color{blue}  NOTE: \color{Turquoise}{\small  \tt \begin{minipage}[c]{1.1\textwidth}  #1 \end{minipage} \ignorespacesafterend }} 
	
	}
\else \newcommand{\note}[1]{} \fi
\newcommand{\afsubm}[1]{ \ifnum \Debug = 1 {\mymargin{#1}}
\fi} %For notes on after-submission changes
\newcommand{\fig}[1]{Figure ``{#1}''}
\else \newcommand{\fig}[1]{Figure~\ref{#1}} \fi
\renewcommand{\color}[1]{}
\newcommand{\N}{\ensuremath{\mathbb N}}
\newcommand{\R}{\ensuremath{\mathbb R}}
\newcommand{\C}{\ensuremath{\mathbb C}}
\newcommand{\Z}{\ensuremath{\mathbb Z}}
\newcommand{\OO}{\ensuremath{\Omega}}
\DeclareRobustCommand{\cev}[1]{%
  \mathpalette\do@cev{#1}%
}
\newcommand{\do@cev}[2]{%
  \fix@cev{#1}{+}%
  \reflectbox{$\m@th#1\vec{\reflectbox{$\fix@cev{#1}{-}\m@th#1#2\fix@cev{#1}{+}$}}$}%
  \fix@cev{#1}{-}%
}
\newcommand{\fix@cev}[2]{%
  \ifx#1\displaystyle
    \mkern#23mu
  \else
    \ifx#1\textstyle
      \mkern#23mu
    \else
      \ifx#1\scriptstyle
        \mkern#22mu
      \else
        \mkern#22mu
      \fi
    \fi
  \fi
}
\newcommand{\seq}[1]{\ensuremath{(#1_n)_{n\in\N}}} 
\renewcommand{\Pr}{\mathbb{P}}
\newcommand{\Lr}[1]{Lemma~\ref{#1}}
\newcommand{\Lrs}[1]{Lemmas~\ref{#1}}
\newcommand{\Tr}[1]{Theorem~\ref{#1}}
\newcommand{\Sr}[1]{Section~\ref{#1}}
\newcommand{\Cr}[1]{Corollary~\ref{#1}}
\newcommand{\st}{such that}
\newcommand{\labtequ}[2]{%\labtequc{#1}{#2}}
 \begin{equation} \label{#1} 	\begin{minipage}[c]{0.9\textwidth}  #2 \end{minipage} \ignorespacesafterend \end{equation} }
\newcommand{\mymargin}[1]{% <- dieses % verhindert ein ungewolltes Leerzeichen
 \ifnum \Debug = 1
  \marginpar{%
    \begin{minipage}{\marginparwidth}\small%
      \begin{flushleft}%
        {\color{blue}#1}%
      \end{flushleft}%
   \end{minipage}%
  }%
 \fi
}%
\newcommand{\mySection}[2]{}
\DeclarePairedDelimiter\abs{\lvert}{\rvert}
\newcommand{\myremark}[1]{\ifnum \Debug = 1 \tiny #1 \fi}
\newcommand{\ar}[1]{\overrightarrow{#1}}
\newcommand{\ra}[1]{\cev{#1}}
\newcommand{\lat}{\ensuremath{\mathbb{L}}}
\title{Convergence of square tilings to the Riemann map}
\author[1]{Agelos Georgakopoulos}
\author[2]{Christoforos Panagiotis}
\affil[1,2]{{Mathematics Institute}\\
        {University of Warwick}\\
        {CV4 7AL, UK}\thanks{Supported by the European Research Council (ERC) under the European Union's Horizon 2020 research and innovation programme (grant agreement No 639046).}\\}
\begin{document}
\date{}
\maketitle

\begin{abstract}
A well-known theorem of Rodin \& Sullivan, previously conjectured by Thurston, states that 
the circle packing of the intersection of a lattice with a simply connected planar domain $\OO$ into the unit disc $\mathbb{D}$ converges to a Riemann map from $\OO$  to  $\mathbb{D}$ when the mesh size converges to 0. 
We prove the analogous statement when circle packings are replaced by the square tilings of Brooks et al.
\end{abstract}

{\bf Keywords}: Riemann map, numerical conformal mapping, square tiling, discrete derivative.

\section{Introduction}
In 1987 Thurston \cite{ThuFin} proposed the following method for approximating the Riemann map from a simply connected domain $\OO \subset \C$ to the unit disc $\mathbb{D}$. Let $k \cdot \mathbb{T}$ denote the triangular lattice re-scaled by a factor of $k>0$, and consider the plane graph $G_n:=  \Omega  \cap 2^{-n} \cdot \mathbb{T}$. Consider the sequence of maps $f_n: \OO \to \mathbb{D}$ obtained by circle packing $G_n$ to $\mathbb{D}$ so that its vertices near a fixed point of $\OO$ are mapped to circles near the origin in $\mathbb{D}$, and interpolating from the vertices of $\mathbb{T}$ to all points in $\OO$. 
Thurston \cite{ThuFin}  conjectured that $f_n$ converges to a Riemann map from $\OO$ to $\mathbb{D}$, and this was proved by Rodin \& Sullivan \cite{RodSu}. The aim of this paper is to prove the analogous statement when circle packings are replaced by another discrete version of the Riemann mapping theorem, the square tilings of Brooks et al. \cite{SqTil}.

The theorem of Rodin  \& Sullivan has been extended in various directions. Convergence for lattices other than the triangular was proved by He \& Rodin \cite{HeRodin}, under the assumption of bounded degree. Stephenson \cite{Steph} proved that the convergence of $f_n$ to the Riemann map is locally uniform.  Doyle, He \& Rodin \cite{DHeRodin} improved  the quality of convergence  to convergence in $C^2$. He  \& Schramm \cite{HeSchRie} gave an alternative proof of the convergence in $C^2$, and their proof works in further generality. In particular, it does not need the assumption of bounded degree of \cite{HeRodin}. Finally, for the triangular lattice, He \& Schramm \cite{HeSchInf} proved $C^\infty$-convergence of circle packings to the Riemann map. Our result for square tilings also gives $C^\infty$-convergence. We work with the square lattice $\lat$ for convenience, but our proof applies to any lattice admitting a vertex-transitive action of the group $\Z^2$.

\begin{theorem}\label{informal}
Consider a Jordan domain $\Omega$ in $\C$, and let $s_n: \Omega \to \C$ be defined by linear interpolation of the Brooks et al.\ square tiling map of $\Omega \cap 2^{-n} \cdot \lat$. Then $(s_n)$ converges in $C^\infty(\Omega)$ to a conformal map.
\end{theorem}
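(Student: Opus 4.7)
The plan follows the classical three-step template for such convergence theorems: compactness, identification of subsequential limits, and upgrade to $C^\infty$-regularity. Write $s_n = u_n + i v_n$, where on $G_n := \Omega \cap 2^{-n} \lat$ the function $u_n$ is the discrete potential for the electrical network under Brooks et al.'s Dirichlet-Neumann boundary assignment (the two distinguished boundary arcs of $\partial \Omega$ carry values $0$ and $1$; the remaining arcs are insulated), and $v_n$ is its discrete harmonic conjugate, i.e.\ the cumulative current, normalised at a reference interior point. Interpret $s_n$ as a map $\Omega \to \C$ by piecewise affine interpolation across the faces of $G_n$, as in the hypothesis of the theorem.

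For compactness, $u_n \in [0,1]$ by the maximum principle and $v_n$ is bounded by the (uniformly bounded) total flux through $G_n$; hence $(s_n)$ is uniformly bounded on $\Omega$. Equicontinuity on every compact $K \Subset \Omega$ follows from quantitative Hölder estimates for discrete harmonic functions on $\lat$, applied to $u_n$ at scales comparable to $\mathrm{dist}(K, \partial\Omega)$, together with analogous estimates for $v_n$ obtained from the discrete Cauchy-Riemann equations coupling the two. Arzelà-Ascoli then yields a locally uniformly convergent subsequence $s_{n_k} \to s = u + iv$.

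For identification, classical convergence of the discrete Dirichlet problem on $\lat$ gives that $u$ solves the continuum mixed Dirichlet-Neumann problem on $\Omega$; passing the discrete Cauchy-Riemann relations to the limit identifies $v$ as the harmonic conjugate of $u$, so $s$ is holomorphic on $\Omega$. Standard theory of conformal moduli of quadrilaterals then forces $s$ to be the unique conformal map of $\Omega$ onto a rectangle respecting the prescribed boundary arcs; in particular $s$ is univalent, and uniqueness of this limit propagates convergence from the subsequence to the whole sequence. To upgrade to $C^\infty$, observe that $s_n$ satisfies the discrete Cauchy-Riemann equations at scale $2^{-n}$, so the mollified sequence $\tilde s_n := s_n * \rho_{\eps_n}$ (with $2^{-n} \ll \eps_n \to 0$) is smooth, converges to $s$ locally uniformly, and has $\bar\partial \tilde s_n \to 0$ uniformly on compact subsets of $\Omega$. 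Cauchy's integral formula applied to $\tilde s_n$ on small circles of fixed radius then transfers locally uniform convergence to convergence of every complex derivative of $s_n$ to that of $s$.

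The most delicate point is the equicontinuity and convergence of the discrete harmonic conjugate $v_n$: unlike $u_n$, which is pinned down by its Dirichlet data and controlled by classical discrete potential theory, $v_n$ is obtained by path-integrating the discrete flux of $u_n$, so its control requires summing fluctuations of the discrete flux along long paths in $G_n$ uniformly in $n$. This is the analogue, in the square-tiling setting, of the mechanism driving Rodin-Sullivan's Hexagonal Packing Lemma, and it is the step where the geometry of $\lat$ and the specific Brooks et al.\ construction must enter essentially. Getting quantitative rates here is also what makes the eventual $C^\infty$-upgrade go through without losing powers of $n$.
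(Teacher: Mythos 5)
Your overall architecture (compactness $\to$ identify a holomorphic subsequential limit via discrete Cauchy--Riemann $\to$ uniqueness to upgrade to full convergence $\to$ $C^\infty$) matches the paper's in broad strokes, and the boundedness inputs you cite for $u_n$ (maximum principle) and for the total flux are the correct ones. But there are two places where the proposal diverges from what actually makes the argument go through, one of which is a genuine gap.

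First, the gap. You write that ``Standard theory of conformal moduli of quadrilaterals then forces $s$ to be the unique conformal map of $\Omega$ onto a rectangle\,\dots\ in particular $s$ is univalent.'' This is circular: uniqueness statements for conformal maps of quadrilaterals presuppose that the candidate is conformal, i.e.\ \emph{injective and onto}. Once you have a holomorphic limit $f$, nothing prevents it a priori from being constant, from folding, or from missing part of the rectangle. Establishing univalence and the boundary correspondence is in fact the bulk of the paper's work: a lemma on $f$-limit points of $\partial\Omega\setminus\{x_1,\dots,x_4\}$ (proved via Donsker-type weak convergence of rescaled random walk to Brownian motion and a regularity lemma for Jordan boundaries), a standalone injectivity lemma (proved twice --- once analytically using the open mapping theorem and local injectivity away from a discrete set of critical points, once combinatorially using the planarity of the image of a discrete subgraph under $s_n$), and a corner lemma controlling the $f$-limit points of each $x_i$ via a length/area estimate for conformal maps. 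You cannot replace this package with a one-line appeal to modulus theory.

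Second, on where the difficulty actually lies. You flag the control of the conjugate $v_n$ as ``the most delicate point,'' analogising it to Rodin--Sullivan's Hexagonal Packing Lemma and suggesting it requires summing fluctuations of discrete flux along long paths. In the Brooks et al.\ construction this is cleaner than you expect: the width coordinate $h'_n$ equals $I^*_n p_n$, where $p_n$ is a \emph{probability} (harmonic on the dual graph, valued in $[0,1]$) and $I^*_n$ is the total current. Boundedness of $h'_n$ therefore reduces to boundedness of $I^*_n$, which the paper proves by a discrete extremal-length argument (a choice of edge weights $W_e = 2^{-n}$ gives a lower bound on effective resistance proportional to $\mathrm{dist}(T,B)^2/\mathrm{area}$); once $I^*_n$ is bounded, equicontinuity of both coordinates follows from the same interior gradient estimate for discrete harmonic functions applied separately on primal and dual graphs, with no long-path summation. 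This duality --- the conjugate is itself a rescaled probability on the dual lattice --- is exactly the structural feature that makes square tilings more tractable here than circle packings, and your proposal misses it. Relatedly, the $C^\infty$ upgrade in the paper does not use mollification and Cauchy's formula; it exploits that discrete partial derivatives of discrete harmonic functions are again discrete harmonic, so the same gradient estimate iterates to give uniform bounds on discrete derivatives of all orders, which then pass to the limit.

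In short: your compactness step and your use of discrete Cauchy--Riemann are aligned with the paper, your diagnosis of where the real difficulty lies is off, and you are missing the injectivity and boundary-behaviour analysis without which the identification of the limit does not close.
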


Detailed definitions are given in the following sections, before the precise statement of \Tr{informal} is given in \Sr{Proof}.

Our result rests on a discrete version of the following remark. A classical result of Kakutani states that Brownian motion is conformally invariant \cite{MorPer} (up to a time reparametrization that is irrelevant for our purposes). It is known that this conformal invariance is still true when the Brownian motion is reflected from $\partial \Omega$ back into $\Omega$  under the assumption that $\partial \OO$ is in $C^{1,\alpha}$ \cite{RBMConf}. This allows one to describe a Riemann map from a Jordan domain $\Omega$ in $\C$ to a rectangle $H:= [0,\ell]\times [0,1]$ as follows. Consider four distinct points $x_1$, $x_2$, $x_3$ and $x_4$ in $\partial \Omega$ in clockwise ordering. These points subdivide $\partial \Omega$ into four subarcs  $T$, $R$, $B$ and $L$, appearing in that order along $\partial \Omega$. Let $f: \Omega \to H$ be the Riemann map mapping the $x_i$ to the corners of $H$ (where we tacitly use Caratheodory's theorem, see \Sr{complex}). Let $\ell$ denote the extremal length (see \Sr{complex}) in $\OO$ between $L$ and $R$. Then we have
\begin{observation} \label{obs}
For every $z\in \OO$ we have $f(z)= p_{LR}+ ip_{TB}$, where $p_{TB}$ is the probability that a Brownian motion started at $z$ and reflected along $\partial \OO$ will reach $T$ before $B$, and similarly, $p_{LR}$ equals the probability that a Brownian motion started at $z$ and reflected along $\partial \OO$ will reach $L$ before $R$ multiplied by $\ell$. 
\end{observation}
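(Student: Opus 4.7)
The plan is to combine Caratheodory's theorem with the conformal invariance of reflected Brownian motion cited as \cite{RBMConf}.

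First, since $\OO$ is a Jordan domain, Caratheodory's theorem extends $f$ to a homeomorphism $\overline f:\overline{\OO}\to\overline H$ sending $x_1,\ldots,x_4$ to the four corners of $H$ and the arcs $T,R,B,L$ onto the four corresponding sides of $\partial H$. Writing $f=u+iv$, both $u$ and $v$ are harmonic, and the Cauchy--Riemann equations together with this boundary behaviour give a mixed Dirichlet--Neumann description of each. For $v$: Dirichlet data $v=1$ on $T$ and $v=0$ on $B$, and vanishing normal derivative along $L\cup R$, because $f$ maps those arcs into vertical segments of $\partial H$, so $u$ is constant along each, forcing $\partial_n v=\pm\partial_\tau u=0$. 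An analogous mixed problem describes $u/\ell$ after swapping the horizontal and vertical sides.

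Second, I transport the problem to $H$ using \cite{RBMConf}: reflected Brownian motion $(X_t)$ in $\overline\OO$ started at $z$ pushes forward under $\overline f$, up to a continuous strictly increasing time change, to reflected Brownian motion $(W_t)$ in $\overline H$ started at $f(z)$. A time change preserves which of the four boundary arcs is visited first, so it suffices to carry out the probabilistic computation in $H$. By It\^o's formula, $\mathrm{Im}(W_t)$ is a bounded martingale up to $\tau_T\wedge\tau_B$: the only boundary local-time contributions in the decomposition come from the vertical sides $L\cup R$, along which the inward normal is horizontal and therefore leaves the imaginary coordinate invariant. Optional stopping gives
\[
v(z)=\mathrm{Im}(f(z))=\Ex_{f(z)}[\mathrm{Im}(W_{\tau_T\wedge\tau_B})]=\Pr_{f(z)}(\tau_T<\tau_B)=p_{TB},
\]
and the analogous computation for $\mathrm{Re}(W_t)$, with boundary terms now coming only from $T\cup B$, yields $u(z)=p_{LR}$. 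Adding the two identities gives $f(z)=p_{LR}+ip_{TB}$.

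The principal obstacle is regularity: the conformal invariance statement of \cite{RBMConf} requires $\partial\OO$ to be $C^{1,\alpha}$, whereas a general Jordan domain need have no smoothness whatsoever. To cover the stated generality, I would exhaust $\OO$ from the inside by smooth Jordan subdomains $\OO_k$ with four marked boundary points converging to $x_1,\ldots,x_4$, apply the identity in each $\OO_k$, and pass to the limit: on the analytic side using Caratheodory kernel convergence of the associated Riemann maps, and on the probabilistic side using tightness of the reflected-BM laws on $\overline{\OO_k}$ together with continuity of the hitting probabilities in the starting point and in the domain (the latter via the strong Markov property and a comparison with BM killed in a narrow collar of $\partial\OO$). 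This limiting step, rather than the smooth computation itself, is where the real technical work lies; since the observation is used only as motivation before the discrete version is formalised in \Sr{Proof}, it is plausible that in the rest of the paper only a regular enough case is actually invoked.
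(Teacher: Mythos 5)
Your proposal takes essentially the same route as the paper's brief justification: reduce to the rectangle via conformal invariance of reflected Brownian motion, verify the formula there, and extend to general Jordan domains by approximating with smooth subdomains and invoking weak convergence of the reflected-BM laws (the paper cites \cite{RBMWeak} for this rather than re-deriving tightness). You supply more detail than the paper at each stage --- the It\^o/optional-stopping computation on the rectangle and the compactness argument for the limiting step --- while the mixed Dirichlet--Neumann paragraph is a correct but unused aside.
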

This follows immediately from the conformal invariance of reflected Brownian motion and the fact that the formula is correct when $\OO$ is replaced by the rectangle $H$. Observation~\ref{obs} can be extended to all Jordan domains \OO\ by approximation by a sequence of Jordan subdomains with $C^{1,\alpha}$ boundaries. This follows from the weak convergence of the reflected Brownian motion on the subdomains to the reflected Brownian motion on \OO\ \cite{RBMWeak}, and the converge of the corresponding conformal maps defined on the subdomains to $f$. 

The construction of the square tilings of Brooks et al.\ can be thought of as a discrete variant of Observation~\ref{obs} (and in fact our results can be used to obtain an alternative proof thereof), with reflected Brownian motion replaced by random walk on a mesh $G_n$: it assigns coordinates to vertices and edges of $G$ similarly to the above function $f$. We use a compactness argument to obtain a convergent subsequence, and then verify that any limiting function $f$ satisfies the Cauchy-Riemann equations \eqref{CR} by noticing that $s_n$ satisfies a discrete variant thereof. We then proceed to show that $f$ is a bijection and determine its boundary behaviour by harnessing the combinatorial structure of our model as well as using probabilistic and complex analytic arguments. This uniquely defines $f$ and implies the convergence of the whole sequence $(s_n)$.

The convergence of discrete functions like the ones we use to functions defined in the continuum is by no means a new idea. In 1928 Courant, Friedrichs and Lewy \cite{Courant} considered functions defined in discrete domains as the solutions of some discrete boundary value problems and proved convergence to their continuous counterparts. Since then several authors have considered similar approximation schemes, see e.g.\ \cite{ChelSmi,LF} and the references therein.

\medskip
Apparently, part of the  motivation for Thurston's question leading to the Rodin--Sullivan theorem came from approximating Riemann maps by computer, and he suggested an algorithm for doing so \cite[Appendix 2]{RodSu}. However, circle packing a given graph into a disc is a computationally challenging problem, and according to \cite{ColSteCir},
{\it ``In the numerical conformal mapping of plane regions, it is unlikely that circle packing can ever compete in speed or accuracy with classical numerical methods...''}. On the contrary, computing the square tiling boils down to solving a linear system of equations of size proportional to the number of vertices of the approximating graph $G_n$. We are not yet sure to what extent our algorithm can compete with or complement existing  numerical methods, but we did implement it on a computer and \fig{juliaRM} shows an example of a resulting  approximation of a Riemann map, while \fig{juliaST200} shows the corresponding square tiling.

\begin{figure}
\centering
\includegraphics[width=0.7\textwidth]{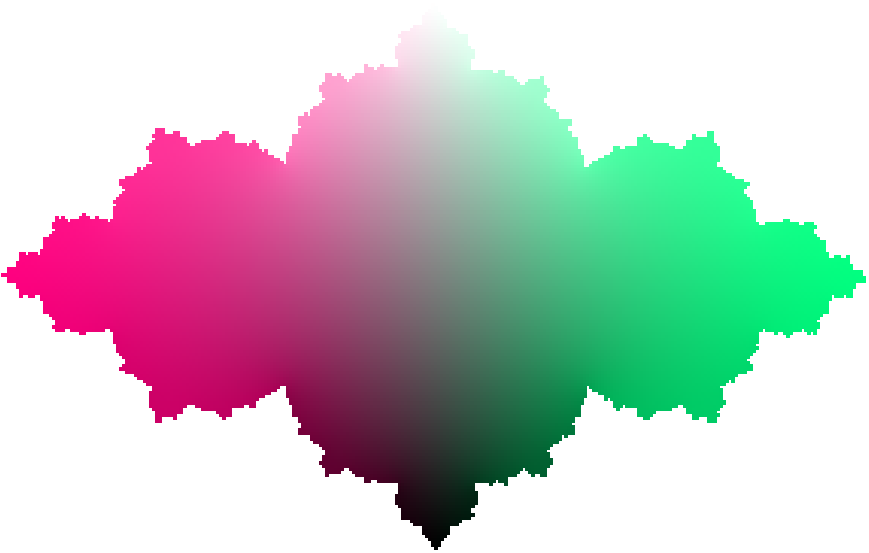}\hspace{.07\textwidth}
\includegraphics[width=0.2\textwidth]{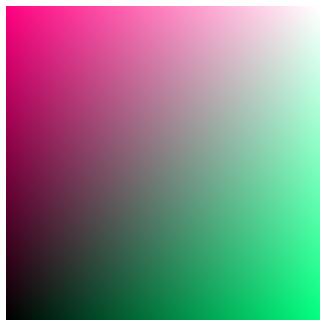}
\caption{An approximation of a Riemann map between a Julia set and a square obtained by implementing our algorithm on Mathematica. Each point in one figure is the image, under the Riemann map, of the unique point in the other figure with the same colour.} \label{juliaRM}
\end{figure}

   \begin{figure}[htbp]
   \centering
   \noindent
   \includegraphics[width=0.65\textwidth, angle=90,origin=c]{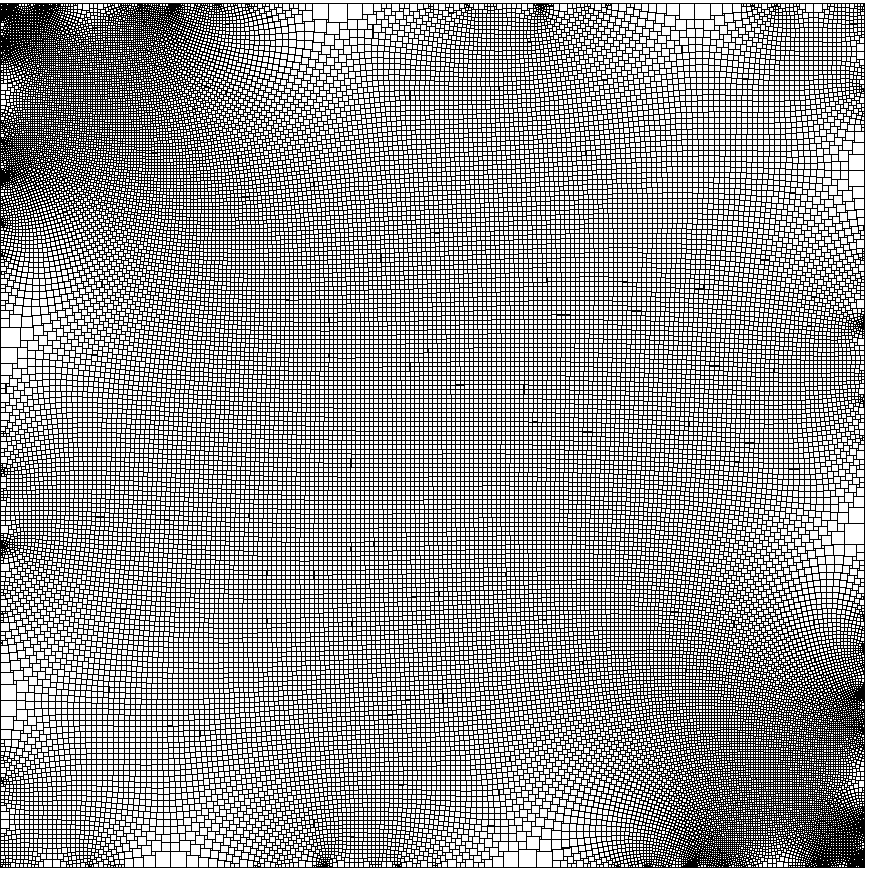}
   \caption{\small The square tiling of a mesh lying inside the Julia set of \fig{juliaRM}. The dark regions consist of large amounts of squares, corresponding to edges along which very little current flows.}
   \label{juliaST200}
   \end{figure}

\medskip
According to \cite{CaFlPaSqu},
{\it ``Riemann, in formulating his famous Riemann mapping theorem, surely relied on the physics of electrical networks and conducting metal plates for motivation.''}
Some biographical evidence about Riemann support this claim. He had a strong interest in the physics of electricity:  {\it ``To complete his Habilitation Riemann had to give a lecture. He prepared three lectures, two on electricity and one on geometry.\footnote{http://www-groups.dcs.st-and.ac.uk/history/Biographies/Riemann.html}''}. Both Riemann and Kirchhoff moved to Berlin in 1847\footnote{http://www-history.mcs.st-andrews.ac.uk/Biographies/Kirchhoff.html}, at a time when the latter was working on his laws of electricity (which we use in \Sr{sec til}). Some of the ideas involved in the construction of square tilings and in our proof support the belief that the physics of electrical networks influenced Riemann in formulating his mapping theorem in his thesis in 1851. Indeed, the quantity $p_{TB}=p_{TB}(z)$ in Observation~\ref{obs} coincides with the voltage $v(z)$ at $z$ when a unit potential difference is imposed between $T$ and $B$, because both functions are harmonic and satisfy the same boundary conditions. The set of points $z$ with $p_{LR}(z)=x\in (0,\ell)$ form a field line of the resulting electrical current.
\section{Preliminaries}

\subsection{Graph theoretic definitions}

Let $G=(V,E)$ be a graph fixed throughout this section, where $V=V(G)$ is its set of vertices and $E= E(G)$ its set of edges. We assume that $G$ is a \defi{plane graph} endowed with a fixed embedding in the plane $\R^2$; more formally, $G$ is a plane graph if $V(G)\subset \R^2$ and each edge $e\in E(G)$ is an arc between its two vertices that does not cross any other edge. 

It is a standard fact that one can associate with the graph $G$ a further plane graph $G^*=(V^*,E^*)$ called the (geometric) \defi{dual} of $G$, defined as follows. We place a vertex of $V^*$ in each face of $G$, and we connect two vertices of $V^*$ with an edge whenever the corresponding faces of $G$ share an edge. Thus there is a bijection $e\mapsto e^*$ from $E$ to $E^*$.

For convenience, we will be working with the square lattice \lat. Its vertex set is the set of points of $\R^2$ with integer coordinates, and its edge-set comprises the horizontal and vertical length $1$ straight line segments connecting them. For an integer $n\geq 0$, we let $2^{-n} \cdot \lat$ denote the plane graph obtained from \lat\ by multiplying the coordinates of each point by $2^{-n}$. Thus each edge of $2^{-n} \cdot \lat$ has length $2^{-n}$. With a slight abuse, we denote both this graph and its vertex set by $2^{-n} \cdot \lat$ for convenience. Notice that \lat\ is a \defi{self-dual} graph, i.e. its dual graph $\lat^*$ is isomorphic to \lat.

\subsection{Complex analytic definitions}\label{complex}

Consider a \defi{simply connected domain} $\Omega \subsetneq \C$, i.e. a connected open set such that its complement $\C\setminus \Omega$ is also connected. The Riemann mapping theorem states that there is a conformal map $\phi$ from $\Omega$ to the unit disk $D$, i.e. a holomorphic and injective function mapping $\Omega$ onto $D$. We will be working with bounded simply connected domains $\Omega$ whose boundary is a simple closed curve $\gamma$ (a homeomorphic image of the unit circle). In this case, $\gamma$ is called a \defi{Jordan curve}, and $\Omega$ is called a \defi{Jordan domain}. A homeomorphic image of the unit interval is called a \defi{Jordan arc}.

Caratheodory studied the boundary behaviour of conformal maps, and established that $\phi$ witnesses the topological properties of the boundary of $\Omega$. In particular, when $\Omega$ is a Jordan domain, Caratheodory's theorem (see e.g. \cite{Krantz,Pom}) states that
$\phi$ extends to a homeomorphism between the closures $\overline{\Omega}$ and $\overline{D}$.

It follows from the Riemann mapping theorem that for every $M>0$, there is a conformal map from $\Omega$ to the rectangle $(0,M)\times (0,1)$. By Caratheodory's theorem, when $\Omega$ is a Jordan domain, $\phi$ extends to a homeomorphism between the closures $\overline{\Omega}$ and $[0,M]\times [0,1]$. Consider now four distinct points $x_1,x_2,x_3,x_4 \in \partial \Omega$ in clockwise ordering, and let $y_1,y_2,y_3,y_4$ be the four corners of $[0,M]\times [0,1]$ in clockwise ordering starting from the top left one. It is natural to ask whether there is a conformal map from $\Omega$ to $(0,M)\times (0,1)$, with $\phi(x_i)=y_i$, $i=1,2,3,4$. As it turns out, three boundary points determine uniquely a conformal map \cite[Corollary 2.7]{Pom}, hence for each choice $x_1,x_2,x_3,x_4$ of boundary points, there is only one value of $M$ (depending on these points) for which a conformal map with the desired property exists. 

To determine the value of $M$, we recall the classical notion of \defi{extremal length}. Let $\overline{x_ix_j}$ denote the arc of $\partial \Omega$ from $x_i$ to $x_j$ traversed in the clockwise direction. To define the extremal length between $\overline{x_1x_2}$ and $\overline{x_3x_4}$, given a Borel-measurable function $\rho:\Omega\to\C$ and a rectifiable curve $\gamma$ in $\Omega$ connecting $\overline{x_1x_2}$ to $\overline{x_3x_4}$, we let
$$L_{\rho}(\gamma):=\int_{\gamma} \rho |dz|,$$
where $|dz|$ denotes the Euclidean element of length. We also define
$$A(\rho):=\iint_{\Omega} \rho^2 dxdy.$$
The extremal length between $\overline{x_1x_2}$ and $\overline{x_3x_4}$ is
$$\sup_{\rho} \inf_{\gamma} \dfrac{L_{\rho}(\gamma)^2}{A(\rho)},$$
where the infimum ranges over all rectifiable curves $\gamma$ in $\Omega$ connecting $\overline{x_1x_2}$ to $\overline{x_3x_4}$, and the supremum ranges over all Borel-measurable functions $\rho:\Omega\to\C$ with $0<A(\rho)<\infty$.

The extremal length between the sides $[0,M]\times \{1\}$ and $[0,M]\times \{0\}$ of the rectangle can be computed explicitly and is equal to $1/M$ 
\cite[p. 52-53]{AhlConformal}. Moreover, the extremal length is conformally invariant \cite[p. 52]{AhlConformal}. Therefore, $M$ is the reciprocal of the extremal length between $\overline{x_1x_2}$ and $\overline{x_3x_4}$.

\subsection{Simple random walk and electrical networks}

A \defi{walk} on $G$ is a (possibly infinite) sequence \seq{v}\ of elements of $V$ \st\ $v_i$ is always connected to $v_{i+1}$ by an edge. The \defi{simple random walk} on $G$ begins at some vertex and when at vertex $x$, traverses one of the edges $\ar{xy}$ incident to $x$ according to the probability distribution
$$p_{x\to y}:= \frac{1}{d(x)},$$
where $d(x)$ denotes the \defi{degree} of $x$, that is, the number of vertices connected to $x$ by an edge. 

There is a well-known correspondence between electrical networks and simple random walk. Given two vertices $p$ and $q$ of $G$, we connect a battery across the two vertices so that the voltage at $p$ is equal to $0$ and the voltage at $q$ is equal to $1$. Then certain currents will flow along the directed edges of $G$ and establish certain voltages at the vertices of $G$. It is a standard fact that for every vertex $u$, the voltage at $u$ is equal to the probability that the simple random walk from $u$ visits $q$ before $p$.

The physical notion of the electrical current can be defined in purely mathematical terms as follows. Let us first denote by $\ar{E}$ the set of ordered pairs $(x,y)$ with $xy\in E$. We write $\ar{xy}$ to denote $(x,y)$. We say that a function $f:\ar{E}\rightarrow \mathbb{R}$ is \defi{antisymmetric}, and write $f:\ar{E}\hookrightarrow \mathbb{R}$, if $f(\ar{xy})=-f(\ra{xy})$ for every $xy\in E$. 
Given two vertices $p$ and $q$ of $G$, we say that a function $f:\ar{E}\hookrightarrow \mathbb{R}$ is a \defi{p-q flow} if it satisfies \defi{Kirchhoff's cycle law}, which postulates that for every vertex $x$ other than $p$ and $q$,
$$\sum_{y\in N(x)} f(\ar{xy})=0,$$
where $N(x)$ denotes the set of neighbours of $x$, i.e. the vertices connected to $x$ by an edge. The \defi{p-q current} is the (unique) p-q flow $i:\ar{E}\hookrightarrow \mathbb{R}$ that satisfies \defi{Kirchhoff's cycle law}.
Kirchhoff's cycle law postulates that for every cycle $C=x_0e_{01}x_1e_{12}x_2\ldots x_n$ in $G$, where the $x_j$ are vertices, the $e_{jk}$ are edges, and $x_n=x_0$, we have
$$\sum_{j=0}^{n-1} i(\ar{x_jx_{j+1}})=0.$$ 

The \defi{intensity} $I^*$ of $i$ is the sum 
$$\sum_{y\in N(x)} i(\ar{xy}).$$ 
The \defi{effective resistance} $R^{\text{eff}}$ between $x$ and $y$ admits several equivalent definitions, among which the most useful for us is
\begin{equation}\label{eff-int}
R^{\text{eff}}=1/I^*.
\end{equation}
Duffin \cite{DuffinEx} proved that the effective resistance coincides with the notion of `discrete extremal length'. We will utilise this fact later on.

\subsection{Discrete partial derivatives and convergence in $C^{\infty}$}

Consider an integer $n\geq 0$. Any function $g$ defined on a subset of $2^{-n}\cdot \lat$ can be extended to the whole of $2^{-n} \cdot \lat$ by setting $g(z)=0$ on the remaining vertices $z$ of $2^{-n} \cdot \lat$. We will always assume that our functions are extended in this way to the whole of $2^{-n} \cdot \lat$. For every vertex $z$ of $2^{-n} \cdot \lat$ we define the functions $\dfrac{\partial g}{\partial x}(z):=2^n \big(g(u)-g(z)\big)$ and $\dfrac{\partial g}{\partial y}(z):=2^n \big(g(v)-g(z)\big)$, where $u=z+2^{-n}$ and $v=z+2^{-n}i$. The functions $\dfrac{\partial g}{\partial x}$ and $\dfrac{\partial g}{\partial y}$ are called the \defi{partial derivatives} of $g$ with respect to $x$ and $y$, respectively. For functions defined on the dual graph $\big(2^{-n} \cdot \lat\big)^*$, the partial derivatives are defined analogously. As usually, by repeatedly applying the operators $\dfrac{\partial}{\partial x}$ and $\dfrac{\partial}{\partial y}$ in any order and any number $k$ of times, we define the partial derivatives of order $k$. 

Consider now a domain $\Omega\subset \C$. We say that a sequence $(f_n)$ of functions defined on $2^{-n} \cdot \lat$ converges in $C^{\infty}(\Omega)$ to a smooth function $f:\Omega\to \C$ if for every closed disk $\Delta\subset\Omega$ and for every $k\geq 0$, the partial derivatives of $f_n$ of order $k$ converge uniformly in $\Delta$ to the corresponding partial derivatives of $f$ of order $k$. For a sequence of functions defined on the dual graph $\big(2^{-n} \cdot \lat\big)^*$, the definition is analogous.

To prove convergence of a sequence of discrete functions we will first extend all functions of the sequence to the whole of $\Omega$, and then apply the Arzel{\`a}-Ascoli theorem \cite{BabyRudin}, which gives necessary and sufficient conditions to decide whether a subsequence of functions converges uniformly.

Let $(f_n)$ be a sequence of continuous functions on a compact set $K\subset \C$. The sequence is said to be equicontinuous if, for every $\epsilon>0$ and $x\in K$, there exists $\delta>0$ such that
$$|f_n(x)-f_n(y)|<\epsilon$$
whenever $|x-y|<\delta$ for every $n\geq 0$.

\begin{theorem}[Arzel{\`a}-Ascoli theorem] 
Let $(f_n)$ be a uniformly bounded and equicontinuous sequence of continuous functions on a compact set $K\subset \C$. Then there is a subsequence of $(f_n)$ that converges uniformly on $K$.
\end{theorem}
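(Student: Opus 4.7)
The plan is to prove this via the classical Cantor diagonal extraction combined with a $3\epsilon$ triangle-inequality argument. First I would note that any compact subset $K$ of $\C$ is separable, so there exists a countable dense subset $\{x_k\}_{k\in\N}\subseteq K$. Since $(f_n)$ is uniformly bounded, for each fixed $k$ the sequence $(f_n(x_k))_{n\in\N}$ lies in a bounded subset of $\C$ and so admits, by Bolzano-Weierstrass, a convergent subsequence.

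Next I would diagonalise: inductively extract $(f^{(1)}_n)\subseteq(f_n)$ convergent at $x_1$, then $(f^{(2)}_n)\subseteq(f^{(1)}_n)$ convergent at $x_2$, and so on. The diagonal sequence $g_n:=f^{(n)}_n$ is, for each $k$, eventually a subsequence of $(f^{(k)}_n)$ and therefore converges at every $x_k$.

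The main step is to upgrade this pointwise convergence on a dense set to uniform convergence on all of $K$; this is where equicontinuity enters. I would first strengthen the pointwise equicontinuity hypothesis to uniform equicontinuity on $K$: given $\epsilon>0$, the balls $B(x,\delta(x))$ provided by pointwise equicontinuity cover $K$, and compactness yields a finite subcover from which a single $\delta>0$ valid at every point can be extracted (the same idea as in Heine-Cantor). Then I would cover $K$ by finitely many $\delta$-balls centered at points $y_1,\ldots,y_N$ of the dense set, and use convergence of $(g_n)$ at each $y_j$ to pick $M$ with $|g_m(y_j)-g_n(y_j)|<\epsilon/3$ for all $m,n\geq M$ and all $j\leq N$. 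For any $z\in K$, choosing $y_j$ with $|z-y_j|<\delta$ and invoking the triangle inequality
\[
|g_m(z)-g_n(z)|\leq |g_m(z)-g_m(y_j)|+|g_m(y_j)-g_n(y_j)|+|g_n(y_j)-g_n(z)|<\epsilon
\]
shows that $(g_n)$ is uniformly Cauchy on $K$, and hence converges uniformly to some continuous limit.

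The hard part will be the last step: transferring convergence from the countable dense set to the whole of $K$. Without equicontinuity, pointwise convergence on a dense set would say nothing about behaviour at other points, so it is precisely the interplay between equicontinuity and the compactness of $K$ (through the Heine-Cantor-style upgrade to uniform equicontinuity and the finite subcover) that powers the conclusion.
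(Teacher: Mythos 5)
The paper does not supply its own proof of the Arzel\`a--Ascoli theorem; it merely states the result and cites it (to Rudin). Your argument is the standard textbook proof---Cantor diagonal extraction over a countable dense subset of $K$ (powered by uniform boundedness and Bolzano--Weierstrass), upgrade of pointwise equicontinuity to uniform equicontinuity via a finite-subcover argument on the compact set $K$, and then the usual $3\epsilon$ triangle-inequality estimate to show the diagonal sequence is uniformly Cauchy---and it is correct; it is in fact essentially the proof given in the reference the paper cites.
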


\section{Construction of the tiling}\label{constr}

In this section we will recall the construction of the square tiling of Brooks et. al. \cite{SqTil}, and we will formally define the sequence $(s_n)$ featuring in \Tr{informal}. 

Consider a Jordan domain $\Omega$ in $\C$, and four distinct points $x_1$, $x_2$, $x_3$ and $x_4$ in $\partial \Omega$ in clockwise ordering. These points subdivide $\partial \Omega$ into four subarcs $T=\overline{x_1x_2}$, $R=\overline{x_2x_3}$, $B=\overline{x_3x_4}$ and $L=\overline{x_4x_1}$, where $\overline{x_ix_j}$ denotes the arc of $\partial \Omega$ from $x_i$ to $x_j$ traversed in the clockwise direction. We will refer to $T$, $R$, $B$ and $L$ as the \em{top, right, bottom} and \em{left arc} of $\partial \Omega$, respectively. 

Without loss of generality we can assume that the origin lies in $\Omega$. For every $n\geq 0$, we consider the subgraph of $2^{-n} \cdot \lat$ determined by those vertices and edges lying entirely in $\Omega$, and we define $\Omega_n$ to be the connected component of the origin in that graph. The \defi{boundary} $\partial \Omega_n$ of $\Omega_n$ is the set of vertices $z$ of $\Omega_n$ that are incident to an edge in $2^{-n} \cdot \lat$ that intersects $\partial \Omega$. By rescaling $\Omega$ if necessary we can assume that for every $n\geq 0$, no pair of adjacent edges of $2^{-n} \cdot \lat$ intersects opposite arcs of $\partial \Omega$ ($T$ and $B$ or $R$ and $L$). In particular, no edge of $2^{-n} \cdot \lat$ intersects opposite arcs of $\partial \Omega$. We can now define $T_n$, $B_n$ to be the sets of vertices of $\partial \Omega_n$ that are incident to an edge intersecting $T$, $B$, respectively. We also define $R_n$, $L_n$ to be the sets of vertices of $\partial \Omega_n \setminus (T_n\cup B_n)$ that are incident to an edge intersecting $R$, $L$, respectively. It will be useful for the construction of our tiling to identify the elements of $T_n$ and $B_n$ into single vertices, which we will denote by $t_n$ and $b_n$, respectively. We will write $G_n=(V_n,E_n)$ for the graph obtained from $\Omega_n$ after these identifications. The \defi{boundary} $\partial G_n$ of $G_n$ is the set of boundary vertices obtained after these identifications, i.e. $\partial G_n:=\{t_n,b_n\}\cup R_n \cup L_n$.

\subsection{The dual graph $G_n^*$} \label{secDual}

We consider $G_n$ as a plane graph, in other words, $V_n$ is now a set of points of $\R^2$ and $E_n$ is a set of arcs in $\R^2$ each joining two points in $V_n$. The points in $\R^2$ occupied by the elements of $V_n$ and $E_n$ can be chosen in such a way that the vertices $t_n$ and $b_n$ are incident with the unbounded face of $G_n$, the position of every other vertex of $V_n$ remains the same after the identifications, and the arcs connecting vertices of $V_n\setminus \{t_n,b_n\}$ are straight lines. It will be useful for the construction of the square tiling to associate to $G_n$ a new graph $G_n^*$ by slightly modifying the standard definition of the dual graph of $G_n$. First, let $t_nb_n$ be an arc in $\R^2$ connecting $t_n$ with $b_n$, every interior point of which lies in the unbounded face of $G_n$. Consider the graph $G_n'=(V_n,E_n \cup\{t_nb_n\})$, and let $(V^*_n,E^*_n \cup\{l_nr_n\})$ be the dual graph of $G_n'$, where $l_n$ is the face incident to $L_n$, and $r_n$ is the face incident to $R_n$. Deleting the edge $l_nr_n$ we obtain the graph $G^*_n=(V^*_n,E^*_n)$. 
 
Notice that there is a bijection $e\mapsto e^*$ from $E_n $ to $E^*_n$. The orientability of the plane allows us to extend the bijection $e\mapsto e^*$
between $E_n$ and $E^*_n$ to a bijection between the directed edges of $G_n$ and $G^*_n$ in such a way that if $\ar{E}(u)$ is the set of edges incident to a vertex $u$ of $G_n$ directed towards $u$, then $\{\ar{e}^* \mid \ar{e}\in \ar{E}(u) \}$ is a cycle oriented in the counter-clockwise direction.

\subsection{The tiling} \label{sec til}

For any vertex $u\in V(G_n)$, let $h_n(u)$ be the probability that simple random walk in $G_n$ starting from $u$ hits $t_n$ before $b_n$. Thus $h_n(t_n)=1$ and $h_n(b_n)=0$. Notice that $h_n$ is \defi{harmonic} at every vertex in $V(G_n)\setminus \{t_n,b_n\}$, i.e.
$$h_n(u)=\frac{1}{d(u)}\sum_{v\in N(u)} h_n(v).$$ The values of $h_n$ will be used as `height' coordinates in the construction of the square tiling. Before defining the `width' coordinates, let us consider the Ohm dual of $h_n$, namely the flow $w_n$ given by the relation
$$w_n(\ar{xy})=h_n(x)-h_n(y)$$ for every directed edge $\ar{xy}$ of $G_n$. Observe that $w_n$ is antisymmetric, i.e. $w_n(\ar{xy})=-w_n(\ar{yx})$, and satisfies Kirchhoff's laws. 

Let us now define the functions $w'_n$ and $h'_n$, the values of which will be used as `width' coordinates. Given a directed edge $\ar{xy}$ in the dual graph $G_n^*$, we let $$w'_n(\ar{xy})=w_n(\ar{xy}^*),$$ where $\ar{xy}^*$ is the directed edge of $G_n$ corresponding to $\ar{xy}$. It is a well-known known consequence of the duality between Kirchhoff's laws in the primal and the dual graph that the function $w'_n$ satisfies both Kirchhoff's cycle law and Kirchoff's node law. To define $h'_n$, set first $h'_n(l_n)=0$. For every other vertex $z\in V(G_n^*)$, pick a path $P_z= z_0 z_1 \ldots z_k$, where $z_0=z$ and $z_k=l_n$, and let $h'_n(z)= \sum_{i<k} w'_n(\ar{z_i z_{i+1}})$. The value of $h'_n(z)$ does not depend on the choice of the path $P_z$, because $w'_n$ satisfies Kirchhoff's cycle law.

It is not hard to see that the pair $h'_n$, $w'_n$ satisfies Ohm's law, i.e. $w'_n(\ar{xy})=h'_n(x)-h'_n(y)$. Since $w'_n$ satisfies Kirchhoff's node law, we deduce that $h'_n$ is a harmonic function on the set $V(G_n^*)\setminus \{l_n,r_n\}$. Furthermore, it follows from the definition of $h'_n$ that
$$h'_n(r_n)=I^*_n:=\sum_{z\in N(t_n)} w_n(\ar{t_n z}),$$
since the directed edges $\ar{t_n z}^*$ form a directed path from $r_n$ to $l_n$. 

Having defined $w'_n$ and $h'_n$, we can now specify the squares $S_e$ of our square tiling indexed by the edges of $G_n$. Consider an edge $e=xy\in E(G_n)$ and assume that $h_n(x)\geq h_n(y)$. Then the square $S_e$ has the form $I_e\times [h_n(x),h_n(y)]$. To define $I_e$, we consider the dual edge $e^*=x'y'$ of $e$, and we let $I_e$ be the interval $[h'_n(x'),h'_n(y')]$, noting that $h'_n(y')\geq h'_n(x')$. For every $u\in V(G_n)$, we define $$I_u=\cup_{e\in E(u)} I_e,$$ where $E(u)$ is the set of edges incident to $u$. It is easy to check that $I_u$ is an interval. Brooks et. al. \cite{SqTil} proved that the interiors of the squares are disjoint and the union of the squares is the rectangle $[0,I^*_n]\times [0,1]$. In other words, the collection $S=\{S_e, e\in E(G_n)\}$ is a tiling of the rectangle $[0,I^*_n]\times [0,1]$. See also \cite{BeSchrHar,planarPB} for a similar construction of a square tiling of a cylinder.

We remark that $h'_n(z)$ coincides with $I^*_n p_n(z)$ for every vertex $z$ of $G_n^*$, where $p_n(z)$ denotes the probability that simple random walk from $z$ visits $r_n$ before $l_n$. This follows from observing that both functions are harmonic at every vertex $z\neq l_n,r_n$, and coincide at $l_n$ and $r_n$, because $p_n(l_n)=0$ and $p_n(r_n)=1$. This easily implies  %We will utilise this fact later on.

\begin{lemma} \label{rotate}
The square tiling of $G_n^*$ with respect to $l_n,r_n$ coincides with that of $G_n$ rotated by 90 degrees  and re-scaled by $1/I^*_n$.
\end{lemma}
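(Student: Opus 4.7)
The plan is to apply the remark immediately preceding the lemma twice, to express the height function $\tilde h$ and width function $\tilde h'$ of the tiling of $G_n^*$ with respect to $l_n, r_n$ in terms of the analogous quantities for the tiling of $G_n$, and then to verify directly that the corresponding squares are obtained from each other by a $90$-degree rotation and a rescaling by $1/I^*_n$. Throughout, let $\tilde h, \tilde h', \tilde I$ denote the analogs of $h_n, h'_n, I^*_n$ in the construction applied to $G_n^*$.

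The first application of the remark gives $\tilde h(z) = p_n(z) = h'_n(z)/I^*_n$ for every $z \in V(G_n^*)$. Applying the same statement to the $G_n^*$-tiling (with $G_n = (G_n^*)^*$ now playing the role of the dual), one obtains $\tilde h'(z) = \tilde I \cdot \tilde p(z)$ for $z \in V(G_n)$, where $\tilde p(z)$ is the probability that the simple random walk on $G_n$ from $z$ hits $b_n$ before $t_n$, i.e.\ $\tilde p(z) = 1 - h_n(z)$. To determine $\tilde I$, I would use the identity $\tilde w(\ar{uv}) = w_n(\ar{uv}^*)/I^*_n$ (a consequence of $\tilde h = h'_n/I^*_n$ and Ohm's law) together with \eqref{eff-int}: summing $\tilde w$ along the directed edges out of $r_n$ translates via the bijection $e \mapsto e^*$ to a summation of $w_n$ along a directed path in $G_n$ between $t_n$ and $b_n$ (analogous to the fact from \Sr{sec til} that the duals of the edges out of $t_n$ form a directed path from $r_n$ to $l_n$), giving $\tilde I = 1/I^*_n$ — the reciprocity of effective resistance for a planar graph and its dual. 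Hence $\tilde h'(z) = (1 - h_n(z))/I^*_n$.

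Finally, to match the squares: for $f = xy \in E(G_n)$ with $h_n(x) \geq h_n(y)$ and $f^* = x'y'$ with $h'_n(y') \geq h'_n(x')$, the original tiling contains $S_f = [h'_n(x'), h'_n(y')] \times [h_n(y), h_n(x)]$, while the square indexed by $f^* \in E(G_n^*)$ in the $G_n^*$-tiling equals
\[
\tilde S_{f^*} = \frac{1}{I^*_n}\bigl([1 - h_n(x), 1 - h_n(y)] \times [h'_n(x'), h'_n(y')]\bigr).
\]
This is precisely the image of $S_f$ under $(a, b) \mapsto ((1 - b)/I^*_n, a/I^*_n)$, a composition of a $90$-degree rotation, a translation, and a rescaling by $1/I^*_n$, as claimed. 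The main technical obstacle is the computation of $\tilde I = 1/I^*_n$, together with the bookkeeping required to ensure that the resulting similarity is an orientation-preserving rotation rather than a reflection.
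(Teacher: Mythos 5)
Your proof is correct and takes essentially the same approach as the paper. The paper leaves the lemma as an unelaborated consequence (``This easily implies'') of the preceding remark $h'_n = I^*_n p_n$; your argument is a careful unpacking of exactly that claim, applying the remark to both $G_n$ and $G_n^*$, determining $\tilde I = 1/I^*_n$ via the flow/duality computation (which you correctly isolate as the only step requiring real work, and which is the discrete primal--dual reciprocity of effective resistance), and then matching the squares $S_f$ and $\tilde S_{f^*}$ under the similarity $(a,b)\mapsto((1-b)/I^*_n,\, a/I^*_n)$. The remaining orientation bookkeeping you flag does check out with the paper's counter-clockwise convention for the bijection $\vec{e}\mapsto\vec{e}^*$.
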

\subsection{Definition of the interpolation $s_n$} \label{def int}

We will now define the functions $s_n:\Omega\to \C$. Given a vertex $z$ of $\Omega_n$ that does not lie in $T_n\cup B_n$, we define the imaginary part of $s_n(z)$ to be equal to $h_n(z)$. For the real part of $s_n(z)$, let $f_1,f_2,\ldots f_k$ be the faces incident to $z$ in $G_n$, where $k$ denotes the number of such faces. The real part of $s_n(z)$ is defined to be the average horizontal coordinate $\sum_{i=1}^k h'_n(z_i)/k$. For those vertices $z$ lying in $T_n\cup B_n$, we define $s_n(z)$ in terms of $h_n$ and $h_n'$ in a similar manner, except that we now replace $z$ by $t_n$ or $b_n$, as appropriate.

Notice that when $I_z$ is not a single point, $s_n(z)$ belongs to the interior of $I_z \times \{h_n(z)\}$. To extend $s_n$ to all of $\Omega$, we first set $s_n$ to be equal to $0$ on the remaining vertices of $2^{-n} \cdot \lat$, and then extend it to every point in $\Omega$ (in fact to every point in $\C$) by linear interpolation. Thus, if $(x_1,y_1)$, $(x_2,y_1)$, $(x_2,y_2)$ and $(x_2,y_1)$ are the four corners of a square in $2^{-n} \cdot \lat$ in counter-clockwise ordering with $(x_1,y_1)$ being the bottom left one, and $(x,y)$ is a point lying in that square, then
\begin{equation}\label{inter}
\begin{gathered}
s_n(x,y)=4^n\Big((x_2-x)(y_2-y)s_n(x_1,y_1)+(x-x_1)(y_2-y)s_n(x_2,y_1)+\\
(x-x_1)(y-y_1)s_n(x_2,y_2)+(x_2-x)(y-y_1)s_n(x_1,y_2)\Big).
\end{gathered}
\end{equation}
We remark that every point $z\in \Omega$ is mapped under $s_n$ in the rectangle $[0,I^*_n]\times [0,1]$, since this holds for the lattice points $2^{-n} \cdot \lat$ and the rectangle $[0,I^*_n]\times [0,1]$ is a convex set.

We end this section by pointing out that the square tiling $S$ gives rise to a plane graph living in $[0,I^*_n]\times [0,1]$ and being isomorphic to a certain subgraph of $G_n$. To be more precise, consider those vertices $u$ of $G_n$ for which $I_u$ is not a single point, and place a vertex at $s_n(u)$. It follows from the definition of $s_n$ that $s_n(u)$ lies in the interior of the segment $I_u\times \{h_n(u)\}$ for every such $u$. It is not hard to see that for every edge $e=uv$ of $G_n$ with $S_e$ not being a single point, an arc can been drawn connecting $s_n(u)$ to $s_n(v)$, all points of which other than its endpoints do not belong to some segment $I_u\times \{h_n(u)\}$, and no pair of such arcs intersects (except at a vertex). This follows easily from the fact that the interiors of the squares in $S$ are disjoint; see \cite{SqTil} for a detailed proof. On the other hand, if $S_e$ degenerates to a single point, then the segments $I_u$ and $I_v$ share a common endpoint but are otherwise disjoint. Hence we can connect $s_n(u)$ to $s_n(v)$ with a straight horizontal line. In this way, we obtain a plane graph that is isomorphic to the subgraph of $G_n$ spanned by those vertices $u$ of $G_n$ for which $I_u$ is not a single point.

\section{Proof of main result}\label{Proof}

In this section we  prove \Tr{informal}. The proof is split into several smaller parts. Let us start by formulating it more precisely.

\begin{theorem}\label{main}
Consider a Jordan domain $\Omega$ in $\C$, and four distinct points $x_1$, $x_2$, $x_3$ and $x_4$ in $\partial \Omega$ in clockwise ordering. Let $E$ be the extremal length between the arcs $\overline{x_1x_2}$ and $\overline{x_3x_4}$ of $\partial \Omega$, and $y_1$, $y_2$, $y_3$, $y_4$ be the four corners of the rectangle $[0,1/E]\times [0,1]$ in clockwise ordering with $y_1$ being the top left one. Then the sequence $(s_n)$ converges in $C^\infty(\Omega)$ to the conformal map $f$ mapping $\Omega$ onto the rectangle $(0,1/E)\times (0,1)$, with $f(x_i)=y_i$, $i=1,2,3,4$.
\end{theorem}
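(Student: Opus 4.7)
The overall plan is the one sketched in the introduction. First, extract a subsequential limit of $(s_n)$ via Arzel\`a--Ascoli; second, show it is holomorphic via discrete Cauchy--Riemann equations; third, identify the limit with the Riemann map of \Tr{main} via the probabilistic interpretation of \Or{obs}; finally, conclude convergence of the whole sequence and upgrade to $C^\infty$.

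For compactness, uniform boundedness of $(s_n)$ reduces to bounding $I_n^*$, since every $s_n$ takes values in $[0,I_n^*]\times[0,1]$. By \eqref{eff-int}, $I_n^*=1/R_n^{\mathrm{eff}}$, where $R_n^{\mathrm{eff}}$ is the effective resistance between $t_n$ and $b_n$ in $G_n$; using Duffin's identification of effective resistance with discrete extremal length, I would show $R_n^{\mathrm{eff}}\to E$ by comparing discrete and continuous extremal lengths through mutual approximation of admissible metrics. Equicontinuity on a compact $K\subset\Omega$ would come from discrete Harnack estimates for $h_n$ on $G_n$ and for $h'_n$ on $G_n^*$ (both discrete-harmonic away from the identified boundary vertices), together with the smoothing effect of the bilinear interpolation \eqref{inter}.

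Any subsequential limit $f$ should be holomorphic, and this is where the discrete Cauchy--Riemann equations enter. Since $w_n(\ar{xy})=h_n(x)-h_n(y)$ is the primal lattice gradient of $h_n$ and $w'_n(\ar{xy})=w_n(\ar{xy}^*)$, the dual lattice gradient of $h'_n$ is obtained from that of $h_n$ by a $90^{\circ}$ rotation, because for the self-dual lattice $\lat$ the directed-edge bijection $\ar{e}\mapsto\ar{e}^*$ from \Sr{secDual} is precisely a quarter-turn. Writing $s_n=u_n+iv_n$ with $v_n$ built from $h_n$ and $u_n$ from the face-average of $h'_n$, this translates into a discrete Cauchy--Riemann relation for $(u_n,v_n)$ up to $O(2^{-n})$ errors coming from the averaging in \Sr{def int}; uniform bounds on discrete derivatives (again Harnack-type) then allow passage to the continuous Cauchy--Riemann equations for $f$.

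The hard part will be identifying $f$ with the prescribed conformal map, i.e.\ proving the correct boundary correspondence and bijectivity. The key observation for boundary values is that $h_n(u)$ equals the probability that simple random walk on $G_n$ from $u$ hits $t_n$ before $b_n$; the identification of $T_n$ and $B_n$ into single vertices, with $L_n$ and $R_n$ left as ordinary internal vertices, is precisely the discrete analogue of absorbing Brownian motion on $T\cup B$ and reflecting it on $L\cup R$. Invoking weak convergence of such reflected-and-absorbed lattice walks to the corresponding diffusion on $\Omega$ (the same ingredient that underlies \Or{obs}), one concludes that $h_n\to p_{TB}$ and, dually, $h'_n/I_n^*\to p_{LR}$ uniformly on compacta, so \Or{obs} identifies $f$ with $p_{LR}/E+i p_{TB}$, which is the unique conformal map sending $x_i\mapsto y_i$. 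Bijectivity onto $(0,1/E)\times(0,1)$ then follows from the argument principle applied to $f$ on internal approximations of $\partial\Omega$, using the boundary values to compute winding number one. Uniqueness of such a conformal map forces every subsequential limit to coincide with $f$, yielding uniform convergence of the whole sequence on compacta. For the $C^\infty$ upgrade, I would note that the discrete differentiation operators $\partial/\partial x$ and $\partial/\partial y$ preserve the discrete Cauchy--Riemann structure, so standard interior estimates for discrete harmonic functions bound every discrete partial derivative of $s_n-f$ on $K$ in terms of the $C^0$-norm of $s_n-f$ on a slightly larger compact; this propagates the $C^0$-convergence to uniform convergence of all partial derivatives of every order on compacta of $\Omega$.
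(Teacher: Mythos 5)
Your high-level plan matches the paper's opening two steps (subsequential limit via Arzel\`a--Ascoli using discrete gradient estimates, then holomorphicity via discrete Cauchy--Riemann equations, and the $C^\infty$ upgrade by differentiating the discrete harmonicity). However, the identification of the limit $f$ is where your route diverges from the paper's and where the genuine gap lies.

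You propose to prove $h_n\to p_{TB}$ and $h'_n/I_n^*\to p_{LR}$, where $p_{TB},p_{LR}$ are hitting probabilities for Brownian motion reflected on $L\cup R$ and absorbed on $T\cup B$, and then invoke \Or{obs} to identify $f$. There are two problems. First, weak convergence of the lattice walk (which reflects at $L_n\cup R_n$ because of the identification of $T_n,B_n$) to a reflected-and-absorbed diffusion on a general Jordan domain is not a standard, citable result; the reflected Brownian motion itself is subtle to define without boundary regularity, and the paper explicitly restricts the conformal-invariance input for \Or{obs} to $C^{1,\alpha}$ boundaries. Second, the extension of \Or{obs} to arbitrary Jordan domains is, in the paper's own discussion, obtained by approximation through smoother subdomains using convergence of the \emph{continuous} conformal maps --- so taking \Or{obs} as an input would be circular (the paper in fact says its theorem gives an \emph{alternative proof} of \Or{obs}, not the other way around). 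The paper sidesteps both issues: it uses the \emph{interior} (non-reflected) Brownian motion via Donsker to establish only one-sided \emph{inequalities} $v(y)\ge \Pr_y(\tau_T=\tau_{\partial\Omega})$ etc.\ (\Lr{limit points}), which control where the $f$-limit points of boundary points may lie, and then pins down $f$ using a separate injectivity proof (\Lr{injective}, via the open mapping theorem plus either a local-injectivity argument or a combinatorial path construction inside the square tiling) and a length--area argument at the four corners (\Lr{corners}). Your proposed argument-principle route to bijectivity is also under-specified: to apply it you would need to know in advance that the boundary values trace the rectangle's boundary monotonically once, which is essentially the conclusion.

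A smaller but real mismatch: you propose to prove $R_n^{\mathrm{eff}}\to E$ directly by mutually approximating discrete and continuous extremal lengths in order to get uniform boundedness. This is harder than needed and is not how the paper proceeds --- the paper only proves a positive lower bound on $R_n^{\mathrm{eff}}$ (hence an upper bound on $I_n^*$) by an explicit admissible weight assignment $W_e=2^{-n}$ in Duffin's variational formula, and the convergence $R_n^{\mathrm{eff}}\to E$ is extracted only \emph{after} the main theorem, as a corollary of the identification of the limit $I$ with $1/E$.
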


\subsection{Convergence to a holomorphic map}

Since $s_n$ is defined via $h'_n$ and $h_n$, it will be useful to first establish the convergence of $h'_n$ and $h_n$. The following lemma is our first step in that direction.

\begin{lemma}\label{conv}
There is a strictly increasing sequence $(k_n)$ of natural numbers such that
both $h'_{k_n}$ and $h_{k_n}$ converge in $C^{\infty}(\Omega)$ to smooth functions $u:\Omega\to \mathbb{R}$ and $v:\Omega\to \mathbb{R}$, respectively, and the intensities $I_{k_n}$ converge to a non-negative real number $I$.
\end{lemma}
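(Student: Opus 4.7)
The plan is to apply the Arzel\`a--Ascoli theorem together with a diagonal extraction, after establishing uniform boundedness and uniform control on all orders of discrete derivatives of $h_n$ and $h'_n$.

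First I would establish uniform boundedness. By construction $h_n$ is a hitting probability, so $0\le h_n\le 1$ pointwise. For $h'_n$, the discrete maximum principle on $G_n^*$ (where $h'_n$ is discrete harmonic away from $l_n,r_n$ with boundary values $0$ and $I^*_n$) gives $0\le h'_n\le I^*_n$, so everything reduces to bounding $I^*_n$ uniformly from above. Using the identity $I^*_n=1/R^{\text{eff}}_n$ from \eqref{eff-int} together with Duffin's theorem identifying effective resistance with discrete extremal length, this reduces to bounding the discrete extremal length between $t_n$ and $b_n$ in $G_n$ from below by a positive constant. This follows by comparison with the continuous extremal length $E>0$ between $\overline{x_1x_2}$ and $\overline{x_3x_4}$ in $\Omega$: a near-optimal continuous metric $\rho$ can be discretised on $2^{-n}\cdot\lat$, and the fact that every lattice path from $T_n$ to $B_n$ induces a rectifiable curve in $\Omega$ joining $\overline{x_1x_2}$ to $\overline{x_3x_4}$ of comparable $\rho$-length transfers the continuous lower bound to the discrete one.

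Next comes the main ingredient: equicontinuity at all orders of discrete derivatives. Both $h_n$ and $h'_n$ are discrete harmonic on the interior of $G_n$ respectively $G_n^*$. Standard discrete potential theory on the square lattice provides the following gradient estimate: for every compact $K\subset\Omega$ and every $k\ge 0$ there exists $C=C(K,k)$ such that any discrete harmonic function $g$ on $\Omega_n$ with $\|g\|_\infty\le M$ satisfies $|\partial^{\alpha} g(z)|\le C M$ for every multi-index $\alpha$ with $|\alpha|\le k$ and every lattice point $z\in K$, as soon as $n$ is large enough that $K$ sits well inside $\Omega_n$. Such estimates can be derived by iterating the discrete Harnack inequality on a suitable family of discrete disks contained in $\Omega$, or equivalently by differencing a discrete Poisson kernel. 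Applied to $h_n$ with $M=1$ and to $h'_n$ with $M=I^*_n\le C'$, this yields uniform bounds on all discrete partial derivatives of any fixed order on each compact subset of $\Omega$; uniform bounds on derivatives of order $k+1$ upgrade in the usual way to equicontinuity of the derivatives of order $k$.

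To conclude, exhaust $\Omega$ by an increasing family of compact sets $K_1\subset K_2\subset \cdots$; for each pair $(j,k)$ the piecewise linear extensions of the order-$k$ discrete partial derivatives of $h_n$ and $h'_n$ on $K_j$ form a uniformly bounded and equicontinuous family, and Arzel\`a--Ascoli provides a uniformly convergent subsequence on $K_j$. A standard diagonal extraction then yields a single subsequence $(k_n)$ along which $h_{k_n}$ and $h'_{k_n}$ converge in $C^\infty(\Omega)$ to smooth limits $v$ and $u$. Since the bounded sequence $(I^*_{k_n})$ lies in $\mathbb{R}_{\ge 0}$, passing to a further subsequence makes $I_{k_n}\to I\ge 0$, proving the lemma. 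The main obstacle is the uniform discrete gradient estimate at all orders; a fully self-contained proof would occupy most of the work, but this is a well-established fact from discrete potential theory on $\mathbb{Z}^2$, and boundary effects are harmless because the estimates are required only on compact subsets of $\Omega$ that are bounded away from $\partial\Omega$.
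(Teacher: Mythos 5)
Your argument follows the paper's proof essentially step for step: bound $I^*_n$ via Duffin's identification of effective resistance with discrete extremal length, combine the resulting uniform sup-norm bounds on $h_n$ and $h'_n = I^*_n p_n$ with a discrete gradient estimate for discrete harmonic functions on $\Z^2$ (the paper's Theorem~\ref{derivative}), and close with Arzel\`a--Ascoli plus a compactness extraction. The only cosmetic differences are that the paper bounds the discrete extremal length using the constant weight $W_e = 2^{-n}$ rather than a discretised near-optimal $\rho$ (which sidesteps any regularity concerns about $\rho$ near the marked points), and it upgrades the first-order gradient estimate to all orders by explicitly noting that discrete partial derivatives of discrete harmonic functions are again discrete harmonic, then identifies the limiting derivatives via its Lemma~\ref{calculus} rather than a diagonal argument over compact exhaustions; these are equivalent formulations of the same argument.
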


For every $n\geq 0$, let $D_n$ be the subgraph $\lat$ spanned by the vertices $(x,y)$ with both $|x|\leq 2^n$ and $|y|\leq 2^n$. In order to prove \Lr{conv} above, we will utilise the next result about the partial derivatives of harmonic functions on $D_n$.

\begin{theorem}\cite{Brandt,LawEst}\label{derivative}
There is a constant $C>0$ such that for every harmonic function $f$ on $D_n$ we have
$$\Bigl\lvert\dfrac{\partial f}{\partial x}(0)\Bigr\rvert \leq \dfrac{C {\lVert f \rVert}_{\infty}}{2^n} \quad \text{ and } \quad \Bigl\lvert\dfrac{\partial f}{\partial y}(0)\Bigr\rvert \leq \dfrac{C {\lVert f \rVert}_{\infty}}{2^n}.$$
\end{theorem}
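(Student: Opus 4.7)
The statement is a discrete analogue of the classical gradient estimate $|\nabla u|(0) \le C\|u\|_\infty/R$ for harmonic functions on a continuum ball of radius $R$; here the role of $R$ is played by $2^n$. I would prove it probabilistically via a coupling of simple random walks on $D_n$, following the martingale-plus-coupling strategy standard in, e.g., Lawler's book.

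The first step is the optional-stopping representation. Since $f$ is harmonic on $D_n$, the process $(f(X_t))_t$ is a martingale, where $(X_t)$ is simple random walk on $\lat$ started in $D_n$ and $\tau$ is its first exit time. Thus $f(x) = \Ex_x[f(X_\tau)] = \sum_{y \in \partial D_n} H_n(x,y) f(y)$ with $H_n(x,y) := \Pr_x(X_\tau = y)$ the discrete Poisson kernel. Applied at $x = 0$ and $x = e_1 := (1,0)$, this yields
\[
|f(0) - f(e_1)| \;\le\; \|f\|_\infty \sum_{y \in \partial D_n} |H_n(0,y) - H_n(e_1,y)|,
\]
reducing the theorem to the total-variation bound $\|H_n(0,\cdot) - H_n(e_1,\cdot)\|_1 = O(1/2^n)$. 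In turn, this quantity is at most twice the probability that any coupling $(X_t, Y_t)$ of SRWs with $X_0 = 0$, $Y_0 = e_1$ fails to coalesce before one of the walks exits $D_n$.

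Next I would build the coupling. The natural choice is a reflection coupling across the perpendicular bisector $\{x_1 = 1/2\}$ of the segment joining $0$ and $e_1$: $Y$ copies $X$'s $\pm e_2$-steps and mirrors its $\pm e_1$-steps, so that $Y_t = (1 - (X_t)_1, (X_t)_2)$ up to coalescence. Under this coupling, the second coordinate of $Y_t - X_t$ is identically zero, while the first coordinate performs a symmetric walk on $\Z$ whose probability of reaching $0$ before either $X$ or $Y$ leaves the box of side $O(2^n)$ can be controlled by a one-dimensional gambler's-ruin estimate, yielding the sought $O(1/2^n)$ bound. The analogous bound on $\partial f/\partial y(0)$ is identical after swapping the roles of $e_1$ and $e_2$.

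The principal obstacle is a parity obstruction on $\Z^2$: the reflection coupling as described keeps $(Y_t - X_t)_1$ odd forever, so the two walks never literally meet. I would address this by inserting, at an appropriate stopping time, a single parity-fixing step in which the coupling departs from pure reflection (for instance, allowing $Y$ to move synchronously with $X$ on one $e_1$-step, chosen with probability calibrated so that the SRW marginal of $Y$ is preserved). Checking that this modification leaves the $O(1/2^n)$ non-coalescence bound intact is the technical heart of the argument. An alternative that sidesteps the coupling entirely is to invoke sharp discrete Green's function asymptotics on $\Z^2$ from the local central limit theorem and difference them via a last-exit decomposition from $D_n$; this is the route followed in the references cited alongside the theorem.
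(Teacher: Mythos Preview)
The paper does not prove this theorem; it is quoted as a known result with citations to Brandt and to Lawler's estimates, and is used as a black box throughout Section~4. So there is no ``paper's own proof'' to compare against.

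As for the soundness of your sketch: the optional-stopping reduction to a total-variation bound on the exit distributions is correct, and the overall strategy is one of the standard ways to obtain such derivative estimates. You have correctly identified the genuine obstacle, namely the parity obstruction that prevents the naive reflection coupling on $\Z^2$ from ever coalescing. Your proposed fix---inserting a single synchronous step to break parity while preserving the SRW marginal---can be made to work, but it is fiddlier than you suggest: one must control not just that coalescence eventually happens but that the probability of \emph{not} coalescing before exit from a box of scale $2^n$ is $O(2^{-n})$, and the parity-fixing step has to be placed so as not to spoil this quantitative bound. A cleaner variant is the ``mirror/synchronous'' coupling that reflects in the horizontal coordinate and moves synchronously in the vertical one until the walks are on the same vertical line, at which point one switches to a coupling that merges them; the first-coordinate difference then performs a lazy SRW on $2\Z+1$ and the relevant estimate reduces to a gambler's-ruin computation, but one still has to argue that the vertical drift does not push either walk out of $D_n$ first. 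Your alternative route via the potential-kernel asymptotics and a last-exit decomposition is in fact closer to what the cited references do, and is arguably the more direct argument here.
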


Since by definition ${\lVert h_n \rVert}_{\infty}\leq 1$, \Tr{derivative} gives after a suitable rescaling and translation of $D_n$ that for every closed disk $\Delta\subset \Omega$, the partial derivatives of $h_n$ on $\Delta$ are bounded. It follows from the next lemma that $h'_n$ is uniformly bounded too.

\begin{lemma}\label{bounded}
There is a constant $c=c(\Omega)>0$ such that $I^*_n \leq c$ for every $n\geq 0$.
\end{lemma}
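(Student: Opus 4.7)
The plan rests on the Dirichlet principle: $h_n$ is the minimiser, among all functions $f\colon V(G_n)\to\R$ with $f(t_n)=1$ and $f(b_n)=0$, of the Dirichlet energy $D(f):=\sum_{xy\in E(G_n)}(f(x)-f(y))^2$, and its minimum value equals $I^*_n$. (A short summation by parts using harmonicity of $h_n$ on $V(G_n)\setminus\{t_n,b_n\}$ gives $D(h_n)=\sum_{z\in N(t_n)}(1-h_n(z))=I^*_n$; minimality is the standard orthogonality of harmonic functions to compactly supported test perturbations.) It therefore suffices to produce a single test function with the prescribed boundary values whose Dirichlet energy is bounded independently of $n$.

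Because $T$ and $B$ are disjoint compact sub-arcs of $\partial\Omega$, the number $d:=\operatorname{dist}(T,B)$ is strictly positive. Define
\[
F(z):=\frac{\operatorname{dist}(z,B)}{\operatorname{dist}(z,T)+\operatorname{dist}(z,B)}\qquad(z\in\overline{\Omega}),
\]
which is a well-defined $(1/d)$-Lipschitz function on $\overline{\Omega}$ with $F\equiv 1$ on $T$ and $F\equiv 0$ on $B$. Set $f_n(t_n):=1$, $f_n(b_n):=0$, and $f_n(v):=F(v)$ for every other vertex $v\in V(G_n)$.

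The key estimate is that $(f_n(x)-f_n(y))^2=O(d^{-2}\,4^{-n})$ uniformly over every edge $xy\in E(G_n)$. For edges incident to neither $t_n$ nor $b_n$ this is immediate from Lipschitzness and $|x-y|=2^{-n}$. For an edge of $G_n$ with $y=t_n$ (and symmetrically for $b_n$), its preimage in $\Omega_n$ is an edge $xv$ with $v\in T_n$ incident to an edge of $2^{-n}\cdot\lat$ crossing $T$; hence $\operatorname{dist}(x,T)\leq 2\cdot 2^{-n}$, and taking $z\in T$ nearest to $x$ gives $|f_n(x)-1|=|F(x)-F(z)|\leq (2/d)\cdot 2^{-n}$. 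Loops at $t_n$ or $b_n$ arising from identifications contribute zero. Since $\Omega$ is bounded and $\lat$ has maximum degree $4$, we have $|E(G_n)|\leq 2|V(\Omega_n)|=O(\operatorname{Area}(\Omega)\cdot 4^n)$; summing the per-edge bound yields
\[
I^*_n\;\leq\;D(f_n)\;\leq\;|E(G_n)|\cdot O(d^{-2}\,4^{-n})\;\leq\;c(\Omega),
\]
as required.

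The only mildly delicate point is absorbing the potentially large degrees of the identified vertices $t_n$ and $b_n$; however each edge incident to them still carries the uniform $O(4^{-n})$ bound, so the total count of $O(4^n)$ edges is handled en masse. No step looks like a genuine obstacle.
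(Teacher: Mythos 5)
Your proof is correct and reaches the bound by a genuinely different, though morally dual, variational principle. The paper bounds $R^{\text{eff}}_n = 1/I^*_n$ from below via Duffin's identification of effective resistance with discrete extremal length, feeding the constant metric $W_e=2^{-n}$ into the $\max_W\min_P$ formula: the numerator $\bigl(\sum_{e\in P}W_e\bigr)^2$ is bounded below by $\operatorname{dist}(T_n,B_n)^2$, and the denominator $\sum_e W_e^2$ above by an area count of $\Omega$. You instead bound $I^*_n$ from above directly via the Dirichlet principle, feeding in the restriction of the fixed $(1/d)$-Lipschitz function $F$; the Lipschitz constant plays the role of the paper's distance estimate, and the edge count $O(4^n)$ plays the role of its area estimate. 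Your identity $D(h_n)=\sum_{z\in N(t_n)}(1-h_n(z))=I^*_n$ by discrete summation by parts is correct, as is the Lipschitz bound $|F(z)-F(w)|\le |z-w|/d$, and you sidestep any appeal to Duffin's theorem, which is a small gain in elementarity. One small gap worth a sentence: your per-edge $O(d^{-2}4^{-n})$ bound does not cover an edge of $G_n$ whose $\Omega_n$-preimage joins a vertex of $T_n$ to a vertex of $B_n$, which becomes a $t_n$--$b_n$ edge contributing $1$ to $D(f_n)$. However, such an edge forces $d=\operatorname{dist}(T,B)\le 3\cdot 2^{-n}$, so none exist once $2^{-n}<d/3$, and the finitely many smaller $n$ are absorbed into the constant $c$; this does not affect the validity of the argument.
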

\begin{proof}
Since $I^*_n$ is by definition equal to the reciprocal of the effective resistance $R^{\text{eff}}_n$ between $t_n$ and $b_n$ \eqref{eff-int}, it suffices to bound $R^{\text{eff}}_n$ from below by a strictly positive real number. 

Duffin \cite{DuffinEx} proved that the effective resistance coincides with the \defi{discrete extremal length}; applied to $G_n$, this statement becomes
$$R^{\text{eff}}_n=\max_{W} \min_{P} \dfrac{\Big( \sum_{e\in E(P)} W_e \Big)^2}{\sum_{e\in E(G_n)} W_e^2},$$
where the minimum ranges over all paths connecting $t_n$ with $b_n$, and the maximum ranges over all assignments $W_e\in [0,\infty)$, $e\in E(G_n)$\footnote{The physical intuition behind this is the classical formula $R^{\text{eff}}_n=\dfrac{V^2}{E}$, where $V$ is the potential difference of an electrical current and $E$ its energy, combined with the fact that the electrical current is the energy minimiser among all functions $W_e$ achieving a given potential difference between the source and the sink.}. For every $n\geq 0$ we will assign to each edge of $G_n$ some positive parameter $W_e=W_e(n)$, and we will show that $$\dfrac{\Big( \sum_{e\in E(P)} W_e \Big)^2}{\sum_{e\in E(G_n)} W_e^2}$$ remains bounded from below for every path $P$ connecting $t_n$ with $b_n$. To this end, for every $e\in E(G_n)$, let $W_e=2^{-n}$. Notice that every path $P$ connecting $t_n$ with $b_n$ gives rise to a path in $\Omega_n$ connecting $T_n$ to $B_n$ which we will still denote by $P$. The sum 
$\sum_{e\in E(P)} W_e$ is now equal to the length of $P$. Hence this sum is bounded from below by the Hausdorff distance between $T_n$ and $B_n$. This distance converges to the distance between the arcs $T$ and $B$, which is strictly positive, showing that for every $n\geq 0$, $\Big( \sum_{e\in E(P)} W_e \Big)^2$ remains bounded from below by a strictly positive constant. 

It remains to bound the denominator $\sum_{e\in E(G_n)} W_e^2$ from above. Associate to each edge $e\in E(G_n)$ the square of side length $2^{-n}$ that contains $e$ (when viewed as an edge in $\Omega_n$) and is dissected by $e$ into two congruent rectangles. The area of each of these squares is equal to $W_e^2=4^{-n}$. It is not hard to see that the interiors of any pair of squares associated to distinct parallel edges of $G_n$ (horizontal or vertical) are disjoint. Moreover, all squares constructed in this way have distance at most $1$ from a point in $\Omega$, hence they lie in a bounded region $\Omega'$ independent of $n$. Therefore,
$$\sum_{e\in E(G_n)} W_e^2 \leq 2 \text{area}(\Omega'),$$
where the factor $2$ comes from considering the squares associated to horizontal edges and those associated to vertical edges.
This completes the proof.
\end{proof}

Since $h'_n=I^*_n p_n$ and ${\lVert p_n \rVert}_{\infty}\leq 1$, \Lr{bounded} and \Tr{derivative} imply that for every closed disk $\Delta\subset \Omega$, the partial derivatives of $h'_n$ on $\Delta$ are uniformly bounded. We are now ready to prove \Lr{conv}.

\begin{proof}[Proof of \Lr{conv}]
The sequence $(I^*_n)$ is bounded by \Lr{bounded}, hence there is a subsequence $(I^*_{k_n})$ of $(I^*_n)$ and a non-negative real number $I$ such that $I_{k_n}$ converges to $I$. Moreover, both $h_{k_n}$ and $p_{k_n}$ are positive harmonic functions bounded from above by $1$. We will use \Tr{derivative} to prove that both $(h_{k_n})$ and $(p_{k_n})$ have further subsequences converging in $C^{\infty}(\Omega)$.

Extend the functions $h_{k_n}$ and $p_{k_n}$ in $\Omega$ by linear interpolation as in \eqref{inter}. Denote these extensions by $H_{k_n}$ and $P_{k_n}$, respectively. Notice that for any point $r$ lying in the interior of some horizontal edge $zy$ of $G_n$, where $y=z+2^{-n}$, we have 
$$\dfrac{\partial H_{k_n}}{\partial x}(r)=\dfrac{\partial h_{k_n}}{\partial x}(z)$$
and
$$\dfrac{\partial P_{k_n}}{\partial x}(r)=\dfrac{\partial p_{k_n}}{\partial x}(z),$$
where in the left-hand side we have the standard partial derivative and in the right-hand side we have the discrete one.
Similar equalities hold for the partial derivatives with respect to $y$.
Moreover, for every point $r$ in the interior of some square of $2^{-n} \cdot \lat$, the partial derivatives of $H_{k_n}$ and $P_{k_n}$ at $r$ are linear combinations of their partial derivatives at the boundary of the square. \Tr{derivative} now implies that the partial derivatives of $H_{k_n}$ and $I^*_{k_n}P_{k_n}$ are locally bounded. Thus the sequences $(H_{k_n})$ and $(I^*_{k_n}P_{k_n})$ are locally bounded and equicontinuous. The Arzel{\`a}-Ascoli theorem now says that the sequences $(H_{k_n})$ and $(I^*_{k_n}P_{k_n})$, hence $(h_{k_n})$ and $(I^*_{k_n}p_{k_n})$, have further subsequences converging locally uniformly to some continuous functions $u:\Omega\to \mathbb{R}$ and $v:\Omega\to\mathbb{R}$, respectively. For convenience, we will assume without loss of generality that the sequences converge along $(k_n)$.

To deduce the convergence in $C^{\infty}(\Omega)$, we observe that if $h$ is a harmonic function defined in a ball around a vertex of $2^{-n} \cdot \lat$, then the function $g(z)=h(z+2^{-n})$ is harmonic at every vertex of the ball except possibly for those at its boundary. It follows that the partial derivatives of $h$ are also harmonic at every vertex of the ball except possibly for those at its boundary, as differences of harmonic functions. This implies that for every $k\geq 0$, all partial derivatives of $h_{k_n}$ and $p_{k_n}$ of order $k$, are harmonic functions on suitable subsets of $V(G_n)$. It is easy to prove inductively using \Tr{derivative} that all partial derivatives of order $k$ of $h_{k_n}$ and $I^*_{k_n}p_{k_n}$ are locally bounded. Arguing as above we deduce that all partial derivatives of order $k$ of $h_{k_n}$ and $I^*_{k_n}p_{k_n}$ have a further subsequence that converges locally uniformly. It follows by \Lr{calculus} below that the limiting functions are the corresponding partial derivatives of order $k$ of $u$ and $v$, respectively. In other words, all subsequential limiting functions coincide with the corresponding partial derivatives of order $k$ of $u$ and $v$, respectively. This implies that all partial derivatives of order $k$ of $h_{k_n}$ and $I^*_{k_n}p_{k_n}$ converge locally uniformly to the corresponding partial derivatives of order $k$ of $u$ and $v$, respectively. This completes the proof.
\end{proof}

We now state the lemma mentioned in the proof of \Lr{conv} above, which is an easy exercise.

\begin{lemma}\label{calculus}
Consider a sequence of piecewise continuously differentiable functions $f_n:[a,b]\to \R$. Assume that there are continuous functions $f,g:[a,b]\to \R$ such that $(f_n)$ converges uniformly to $f$, and $(f'_n)$ converges uniformly to $g$. Then $f$ is differentiable with $f'=g$.
\end{lemma}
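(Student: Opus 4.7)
The plan is to reduce the claim to the fundamental theorem of calculus, exploiting the uniform convergence of both sequences to exchange limits and integrals.

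First I would fix $x \in [a,b]$. Since each $f_n$ is piecewise continuously differentiable (hence continuous on $[a,b]$ and $C^1$ off a finite set), the fundamental theorem of calculus yields
\begin{equation*}
f_n(x) = f_n(a) + \int_a^x f_n'(t)\,dt,
\end{equation*}
where the integrand is defined except at finitely many points and is Riemann integrable on each piece. Uniform convergence $f_n' \to g$ on $[a,b]$ implies
\begin{equation*}
\int_a^x f_n'(t)\,dt \;\longrightarrow\; \int_a^x g(t)\,dt,
\end{equation*}
because $\bigl|\int_a^x (f_n' - g)\bigr| \le (b-a)\,\|f_n'-g\|_\infty \to 0$. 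Combined with the pointwise limit $f_n(x) \to f(x)$ and $f_n(a) \to f(a)$, passing to the limit in the FTC identity gives
\begin{equation*}
f(x) = f(a) + \int_a^x g(t)\,dt.
\end{equation*}

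Finally, since $g$ is continuous on $[a,b]$, the fundamental theorem of calculus applied to the right-hand side shows that $f$ is differentiable on $[a,b]$ with $f'(x) = g(x)$, as required. There is essentially no obstacle; the only point requiring a line of care is justifying the FTC for the piecewise $C^1$ functions $f_n$, which follows by summing the FTC over the finitely many subintervals on which $f_n$ is $C^1$ and using continuity of $f_n$ at the breakpoints to telescope.
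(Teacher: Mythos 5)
Your proof is correct and complete. The paper does not actually supply a proof of this lemma — it is stated as ``an easy exercise'' immediately after \Lr{conv} — so there is no paper argument to compare against; your FTC-based argument (write $f_n(x)=f_n(a)+\int_a^x f_n'$ by telescoping over the $C^1$ pieces, pass to the limit using uniform convergence of $f_n'$, then differentiate the resulting integral identity using continuity of $g$) is the standard way to fill this gap, and you have correctly flagged the one point needing care, namely that the FTC identity persists for piecewise $C^1$ functions because continuity at the breakpoints lets the piecewise integrals telescope.
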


We fix a sequence $(k_n)$, smooth functions $u:\Omega\to \R$ and $v:\Omega\to \R$, and a constant $I$ as in \Lr{conv}, and we let $f:\Omega\to\C$ be the function defined as $f=u+iv$. In the next lemma we show that $(s_{k_n})$ converges to $f$.

\begin{lemma}\label{s conv}
The sequence $(s_{k_n})$ converges in $C^{\infty}(\Omega)$ to $f$.
\end{lemma}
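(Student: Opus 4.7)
The plan is to reduce \Lr{s conv} to \Lr{conv}, which already supplies the $C^\infty(\Omega)$ convergence $h_{k_n}\to v$ and $h'_{k_n}\to u$. I will split $s_n = \mathrm{Re}(s_n) + i\,\mathrm{Im}(s_n)$ and verify the convergence of each part separately, then transport everything to the bilinear extension \eqref{inter}. A preliminary observation is that on any closed disk $\Delta\subset\Omega$, for all sufficiently large $n$, every vertex of $2^{-n}\cdot\lat$ lying in $\Delta$ is an interior vertex of $\Omega_n$ with four incident faces, so the definitions from \Sr{def int} simplify to their ``bulk'' form.

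For the imaginary part, the identity $\mathrm{Im}(s_n)(z)=h_n(z)$ holds at every interior vertex, so Lemma \Lr{conv} immediately gives that $\mathrm{Im}(s_{k_n})$ together with all its discrete derivatives converges uniformly on $\Delta$ to $v$ and its derivatives. For the real part, at an interior vertex $z$ one has
$$\mathrm{Re}(s_n)(z)=\frac{1}{4}\sum_{i=1}^{4}h'_n(z_i),$$
where $z_1,\dots,z_4$ are the four incident dual vertices, located at $z+2^{-n-1}(\pm 1\pm i)$. Because the assignments $z\mapsto z_i(z)$ are translations, they commute with the discrete difference operators $\partial/\partial x$, $\partial/\partial y$, and hence with every discrete partial derivative $\partial^\alpha$; consequently
$$\partial^{\alpha}\mathrm{Re}(s_n)(z)=\frac{1}{4}\sum_{i=1}^{4}\partial^{\alpha}h'_n(z_i)$$
for every multi-index $\alpha$. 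Since $h'_{k_n}\to u$ in $C^\infty(\Omega)$ and $u$ is smooth, a triangle inequality controlling $|\partial^{\alpha}h'_{k_n}(z_i)-\partial^{\alpha}u(z_i)|$ (from uniform convergence) together with $|\partial^{\alpha}u(z_i)-\partial^{\alpha}u(z)|=O(2^{-n})$ (from smoothness) shows that $\partial^{\alpha}\mathrm{Re}(s_{k_n})(z)\to\partial^{\alpha}u(z)$ uniformly on $\Delta$ for every $\alpha$.

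Finally, I must transfer the discrete convergence to the standard derivatives of the bilinearly interpolated function $s_n$. Differentiating \eqref{inter} shows that, at an interior point of any grid square of $2^{-n}\cdot\lat$, each first-order partial derivative of $s_n$ is a convex combination of the two discrete partial derivatives (in the corresponding coordinate direction) at the corners of that square; iterating the argument, the same holds for partial derivatives of any order. Combined with the already-established uniform convergence of the discrete partial derivatives of $s_{k_n}$ to those of $u$ and $v$, and with the continuity of the partial derivatives of $f=u+iv$, this yields $s_{k_n}\to f$ in $C^\infty(\Omega)$.

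The only place where genuine care is required is the real-part formula, because $\mathrm{Re}(s_n)$ is not $h'_n$ but an average of $h'_n$ at four nearby dual vertices. This averaging is harmless: the displacement of each $z_i$ from $z$ is $O(2^{-n})$ and $C^\infty$-convergence of $h'_{k_n}$ plus smoothness of the limit $u$ force the averaging error to vanish at every derivative order, so nothing beyond Lemma \Lr{conv} and the elementary bilinear-interpolation calculation is needed.
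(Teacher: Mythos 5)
Your core argument matches the paper's proof: split $s_n$ into imaginary part ($=h_n$) and real part (average of $h'_n$ over incident dual vertices), observe that discrete differencing commutes with the averaging, and invoke \Lr{conv}. You also make explicit a detail the paper glosses over, namely that the dual vertices $z_i$ are displaced from $z$ by $O(2^{-n})$ and that smoothness of $u$ absorbs this displacement; this is a useful clarification.

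However, your final paragraph --- transferring the convergence of \emph{discrete} partial derivatives to the \emph{classical} partial derivatives of the bilinear interpolant --- is both unnecessary and incorrect as stated. It is unnecessary because the paper's definition of $C^{\infty}(\Omega)$ convergence for a sequence of functions on $2^{-n}\cdot\lat$ (Section~2.4) is formulated purely in terms of \emph{discrete} partial derivatives converging uniformly on compacta to the corresponding (continuous) derivatives of the limit; no claim is made about classical derivatives of the interpolated $s_n$. It is incorrect because ``iterating the argument'' fails at second order: a bilinear interpolant is only piecewise bilinear, so its classical first-order partials are discontinuous across grid lines, its classical pure second-order partials vanish identically in cell interiors, and higher classical derivatives do not exist in the required sense. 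So the convex-combination representation you derive for first derivatives has no analogue at higher orders. Delete this final step and appeal directly to the paper's discrete definition of $C^\infty(\Omega)$ convergence, and your proof is complete and coincides with the paper's.
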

\begin{proof}
Recall that for every vertex $z\in V(G_n)$, the real part of $s_n$ is equal to $\sum_{i=1}^k h'_n(f_i)/k$. Every partial derivative of order $j$ of $\sum_{i=1}^k h'_n(f_i)/k$ at $z$ is a linear combination of the corresponding partial derivatives of order $j$ of $h'_n$
at $f_i$. Hence every partial derivative of order $j$ of the real part of $s_{k_n}$ converges locally uniformly to the corresponding partial derivative of order $j$ of $u$ by \Lr{conv}. The imaginary part of $s_n$ is by definition equal to $h_n$, hence converges in $C^{\infty}(\Omega)$ to $v$. Thus we obtain the desired result.
\end{proof}

Our aim is to show that $f$ is the conformal map of \Tr{main}. We first show that $f$ is holomorphic.

\begin{lemma}\label{holomorphic}
The function $f:\Omega\to \C$ is holomorphic.
\end{lemma}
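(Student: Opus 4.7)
The plan is to show that $u$ and $v$ satisfy the Cauchy-Riemann equations on $\Omega$; since \Lr{conv} guarantees both are smooth, this will imply that $f = u + iv$ is holomorphic. The essential input is that the construction of $h'_n$ from $h_n$ via the Ohm duality $w'_n(\ar{e}) = w_n(\ar{e}^*)$ is a discrete analogue of harmonic conjugation, so the pair should obey a discrete Cauchy-Riemann identity edge by edge; passing this identity through the locally uniform $C^\infty$ convergence of \Lr{conv} will then deliver the continuous version.

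First I would make the orientation convention of \Sr{secDual} explicit at the level of individual edges. The convention that $\{\ar{e}^* : \ar{e} \in \ar{E}(u)\}$ is a counter-clockwise cycle around each vertex $u$ is equivalent to rotating every primal directed edge by $90^\circ$ clockwise to obtain its dual. For an interior vertex $z$ of $G_n$ and the horizontal edge $\ar{e} = \ar{z,\, z+2^{-n}}$, this says $\ar{e}^* = \ar{f_+ f_-}$, where $f_\pm$ are the face-centres directly above and below $e$ (so $f_+ = f_- + 2^{-n} i$), both lying on the shifted lattice $(2^{-n-1}, -2^{-n-1}) + 2^{-n}\cdot \lat$. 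Combining $w_n(\ar{e}) = h_n(z) - h_n(z+2^{-n})$, $w'_n(\ar{e}^*) = h'_n(f_+) - h'_n(f_-)$, and $w'_n(\ar{e}^*) = w_n(\ar{e})$, and then rewriting in terms of discrete partial derivatives, gives
\begin{equation*}
\dfrac{\partial h'_n}{\partial y}(f_-) = -\dfrac{\partial h_n}{\partial x}(z).
\end{equation*}
The analogous calculation for the vertical edge $\ar{z,\, z+2^{-n} i}$, whose dual points east, produces
\begin{equation*}
\dfrac{\partial h'_n}{\partial x}(f_W) = \dfrac{\partial h_n}{\partial y}(z),
\end{equation*}
where $f_W$ is the face-centre immediately west of that edge. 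These are the discrete Cauchy-Riemann equations.

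Finally, I would fix $z \in \Omega$ and let $n \to \infty$ along the subsequence $(k_n)$ of \Lr{conv}. That lemma asserts that the partial derivatives of $h'_{k_n}$ and $h_{k_n}$ converge locally uniformly in $\Omega$ to those of $u$ and $v$. The dual face-centres $f_-, f_W$ differ from $z$ by the vanishing shift $(2^{-n-1}, -2^{-n-1})$, and since $u$ is continuously differentiable, evaluating its derivatives at the shifted points has the same limit as evaluating at $z$. The two discrete identities therefore pass to
\begin{equation*}
\dfrac{\partial u}{\partial y}(z) = -\dfrac{\partial v}{\partial x}(z), \qquad \dfrac{\partial u}{\partial x}(z) = \dfrac{\partial v}{\partial y}(z),
\end{equation*}
so $f = u + iv$ is holomorphic on $\Omega$. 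The main obstacle is the orientation bookkeeping: one must track the duality convention carefully so that the signs in the two discrete identities come out \emph{opposite}, yielding the correct CR equations rather than, for instance, $\partial u/\partial y = +\partial v/\partial x$. Once the orientation has been pinned down on a single example, the remainder of the argument is a direct application of the strong convergence supplied by \Lr{conv}.
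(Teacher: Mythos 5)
Your proof takes essentially the same route as the paper's: derive discrete Cauchy--Riemann identities from the Ohm duality $w'_n(\ar{e}) = w_n(\ar{e}^*)$ combined with the orientation convention of \Sr{secDual}, then pass to the limit using the $C^\infty$-convergence supplied by \Lr{conv}. Your signs and the locations of the shifted dual face-centres agree exactly with the paper's equation \eqref{CRd}.

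There is one small step missing. The discrete identities you derive hold at \emph{vertices} $z$ of $G_n$; but you then say ``fix $z \in \Omega$ and let $n\to\infty$,'' and a generic $z \in \Omega$ is never a vertex of $G_n$. The identities only apply for a fixed $z$ if $z$ has dyadic coordinates, so that $z$ is a vertex of $2^{-n}\cdot\lat$ for all large $n$ (and of degree $4$ in $G_n$ once $n$ is large). The paper first proves the CR equations at such dyadic $z$, and then extends to all of $\Omega$ using the continuity of the partial derivatives of $f$ together with the density of the dyadic points. You should insert that density-and-continuity step (or, equivalently, replace $z$ by a sequence of vertices $z_n \to z$ in $G_{k_n}$ and use the \emph{local uniform} convergence of the discrete derivatives, not just pointwise convergence at $z$). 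With that addition your argument is complete and identical in substance to the paper's.
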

\begin{proof}
It follows from \Lr{conv} that $f\in C^1(\Omega)$ (in fact $f\in C^{\infty}(\Omega)$). We will verify that $f$ satisfies the Cauchy-Riemann equations
\begin{align} \label{CR}
\dfrac{\partial u}{\partial x}=\dfrac{\partial v}{\partial y} \quad \text{ and } \quad \dfrac{\partial u}{\partial y}=-\dfrac{\partial v}{\partial x},
\end{align}
which implies that $f$ is holomorphic.

Consider a point $z\in \Omega$ with \defi{dyadic} coordinates $z=(k2^{-m},l2^{-m})$, $k,l,m\in \mathbb{N}$. Notice that for every $n$ large enough, $z$ is occupied by a vertex of $G_n$ of degree $4$. Let $\ar{E}(z)$ be the four edges of $G_n$ incident to $z$ directed towards $z$. Let $C=C(n)$ be the dual directed cycle oriented in the counter-clockwise direction. Recall that for $\ar{rz}\in\ar{E}(z)$ we have
$$h'_n(r')-h'_n(z')=h_n(r)-h_n(z),$$ where $r'$ and $z'$ are the endvertices of the dual directed edge $\ar{rz}^*\in E(C)$ of $\ar{rz}$. We can use this property to deduce that the pair $h'_n$, $h_n$ satisfies the following `discrete Cauchy-Riemann' equations
\begin{align} \label{CRd}
\dfrac{\partial h'_n}{\partial x}(z+(-1+i)2^{-n-1})=\dfrac{\partial h_n}{\partial y}(z) \quad \text{ and } \quad \dfrac{\partial h'_n}{\partial y}(z+(1-i)2^{-n-1})=-\dfrac{\partial h_n}{\partial x}(z).
\end{align}
Taking limits along the sequence $(k_n)$ of \Lr{conv} we deduce that $f$ satisfies the Cauchy-Riemann equations at $z$. The continuity of the partial derivatives of $f$ combined with the density of the set of points with dyadic coordinates implies that $f$ satisfies the Cauchy-Riemann equations at every point of $\Omega$.
\end{proof}

In view of \eqref{CRd}, we can deduce that the partial derivatives of $h'_n$ are locally bounded just from the fact that the partial derivatives of $h_n$ are locally bounded, namely without using \Lr{bounded}. However, \Lr{bounded} is necessary to ensure that the subsequential limits of the intensities $I^*_n$ are indeed bounded.

\subsection{Boundary behaviour}

Consider a point $z\in \partial \Omega$. We say that $y\in\C$ is an $f$-limit point of $z$ if there is a sequence $(z_k)$ in $\Omega$ converging to $z$ such that $f(z_k)$ converges to $y$. Write $T'$ (resp. $B'$, $L'$, $R'$) for the top (resp. bottom, left, right) side of the rectangle $[0,I]\times [0,1]$.

Using the weak convergence of simple random walk to Brownian motion, we prove the following lemma about the $f$-limit points of $\partial \Omega \setminus \{x_1,x_2,x_3,x_4\}$. To be more precise, consider simple random walk on the graph $2^{-n} \cdot \lat$, and let $S_n(t)$, $t=0,1,\ldots$ denote its position at time $t$. Extend $S_n(t)$ on the whole interval $[0,\infty)$ by linear interpolation, and define the process $W_n(t)=S_n(4^n t)$. It is an easy application of Donsker's invariance principle \cite{MorPer} that $W_n$ converges weakly in the locally uniform topology to the $2$-dimensional Brownian motion $W$. Notice that since we are rescaling our lattice, we do not need to further scale $S_n$ to obtain the convergence.

\begin{lemma}\label{limit points}
Consider a point $z\in \partial \Omega \setminus \{x_1,x_2,x_3,x_4\}$. If $z\in U$, where $U\in\{T,B,L,R\}$, then all $f$-limit points of $z$ lie in $U'$.
\end{lemma}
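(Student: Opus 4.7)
By symmetry we may assume $U = T$; the other three cases are analogous. The image of $s_n$ lies in $[0, I^*_n] \times [0, 1]$, and since $I^*_{k_n} \to I$, every $f$-limit point of $z$ lies in the closed rectangle $[0, I] \times [0, 1]$. Because $T' = [0, I] \times \{1\}$, it suffices to prove that $v = \Im f$ satisfies $v(w) \to 1$ as $w \to z$ with $w \in \Omega$. By \Lr{s conv}, $v$ is the locally uniform limit on $\Omega$ of the linear interpolations $H_{k_n}$ of $h_{k_n}$, and for each fixed $w \in \Omega$ the value $H_{k_n}(w)$ is a convex combination of the values of $h_{k_n}$ at the four vertices of $2^{-n} \cdot \lat$ surrounding $w$ (all lying in $V(G_{k_n})$ once $n$ is large enough). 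Hence it suffices to prove the following claim: for every $\epsilon > 0$ there exist $\eta > 0$ and $N \in \N$ such that $h_n(u) \geq 1 - \epsilon$ for every $n \geq N$ and every $u \in V(G_n) \cap B(z, \eta)$.

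To prove the claim, fix $r > 0$ so small that $B(z, 2r) \cap \partial \Omega \subseteq T$, which is possible because $z$ lies in the interior of the arc $T$. For every $n$ with $2^{-n} < r$, any vertex $v \in \partial \Omega_n \setminus T_n$ must lie outside $B(z, r)$: by definition $v$ is incident to an edge of $2^{-n} \cdot \lat$ crossing $\partial \Omega$ at a point of $B \cup L \cup R \subseteq B(z, 2r)^c$, hence $|v - z| \geq 2r - 2^{-n} > r$. Thus $\partial \Omega_n \cap B(z, r) \subseteq T_n$. Let $\pi_n$ denote simple random walk on $\Omega_n$ from $u$ (equivalently, viewed on $G_n$, it is the walk computing $h_n(u)$), and let $\sigma'$ be the first time $\pi_n$ either exits $B(z, r)$ or reaches a vertex of $\partial \Omega_n \cap B(z, r)$. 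Because $B_n \cap B(z, r) = \emptyset$, on the event $\{\pi_n(\sigma') \in \partial \Omega_n\}$ the walk has hit $T_n$ before $B_n$; therefore $h_n(u) \geq 1 - q_n(u)$, where $q_n(u)$ denotes the probability that $\pi_n$ exits $B(z, r)$ before reaching $\partial \Omega_n \cap B(z, r)$.

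It remains to show that $q_n(u) < \epsilon$ uniformly for $u \in V(G_n) \cap B(z, \eta)$, once $\eta$ is small and $n$ is large. By Donsker's invariance principle, the rescaled walk $W_n(t) := \pi_n(\lfloor 4^n t \rfloor)$ started at a vertex $u_n \to u$ converges in distribution, in the topology of locally uniform convergence, to planar Brownian motion $W$ from $u$. A standard weak-convergence argument then yields that $q_n(u) \to \omega_D(u, \Omega \cap \partial B(z, r))$, where $D := \Omega \cap B(z, r)$ and $\omega_D$ is Brownian harmonic measure in $D$. Since $\Omega$ is a Jordan domain, every point of $\partial\Omega$ is regular for the Dirichlet problem, and in particular $\omega_D(u, \Omega \cap \partial B(z, r)) \to 0$ as $u \to z$. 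Combining these two facts, together with the continuity of $\omega_D$ in its starting point to promote pointwise convergence to uniform convergence on $B(z, \eta)$, we obtain the required bound. The chief technical obstacle is this uniform-in-starting-point transfer from Brownian motion to random walk; it can be handled via continuity of $\omega_D$, or more robustly by constructing a Skorokhod coupling of $W_n$ and $W$ and directly comparing hitting times of $\partial D$.
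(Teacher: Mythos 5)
Your overall strategy matches the paper's: reduce to showing the imaginary part $v$ tends to $1$ along sequences approaching $z\in T$, compare simple random walk to Brownian motion via Donsker plus a Skorokhod coupling, and finish off using regularity of the boundary point $z$ for planar Brownian motion. The localization to the ball $B(z,r)$ with $B(z,2r)\cap\partial\Omega\subseteq T$ is a pleasant variant: it lets you conclude directly from Dirichlet regularity of $z$ for the domain $D=\Omega\cap B(z,r)$ that the harmonic measure of the circular part tends to $0$, whereas the paper instead establishes the global inequality $v(y)\geq \Pr_y(\tau_T=\tau_{\partial\Omega})$ and then shows $\Pr_y(\tau_T=\tau_{\partial\Omega})\to 1$ by transferring, via the conformal map of the theorem, to an explicit reflection-principle computation in a rectangle. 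Your route is somewhat more elementary for that last step and is a reasonable alternative. Your reduction to a bound on $h_n$ at surrounding lattice vertices, and the verification that $\partial\Omega_n\cap B(z,r)\subseteq T_n$ for $2^{-n}<r$, are both correct.

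There is, however, a genuine gap in the final paragraph. You phrase your claim so that you must prove $q_n(u)<\epsilon$ \emph{uniformly} over $u\in V(G_n)\cap B(z,\eta)$ and $n\geq N$. The pointwise weak-convergence $q_n(u)\to\omega_D(u,\Omega\cap\partial B(z,r))$ plus regularity of $z$ only give you: for each fixed interior $u$ the discrete exit probability converges, and the limiting harmonic measure is small near $z$. Neither ``continuity of $\omega_D$'' nor the Skorokhod coupling, as invoked, promotes this to a bound uniform over all lattice points in $B(z,\eta)$ simultaneously with all $n\geq N$, because $B(z,\eta)$ contains vertices arbitrarily close to $\partial D$ where neither equicontinuity of $q_n$ nor continuity of $\omega_D$ up to the boundary is available a priori. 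This is exactly the pitfall the paper avoids by taking limits in the opposite order: first fix an interior point $y$ (which may be assumed dyadic), pass $n\to\infty$ to obtain the continuum inequality for $v(y)$, extend to all $y\in\Omega$ by continuity of both sides, and only then let $y\to z$. If you reorganize your argument in the same way---establishing $v(y)\geq 1-\omega_D(y,\Omega\cap\partial B(z,r))$ for fixed $y$ first, then letting $y\to z$ and appealing to regularity of $z$ for $D$---the uniformity issue disappears and the rest of your proof goes through.
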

\begin{proof}
Consider a point $y\in \Omega$. We claim that 
\begin{align}\label{ineq}
\begin{split}
v(y)\geq \Pr_y(\tau_T=\tau_{\partial \Omega}), \quad v(y)\leq 1-\Pr_y(\tau_B=\tau_{\partial \Omega}), \\
u(y)\geq I\Pr_y(\tau_R=\tau_{\partial \Omega}) \quad \text{ and } \quad u(y)\leq I\big(1-\Pr_y(\tau_L=\tau_{\partial \Omega})\big),
\end{split}
\end{align}
where $\Pr_y$ denotes the probability measure of Brownian motion starting from $y$, and $\tau_S$ denotes the first hitting time of a set $S$.
We will prove only the first inequality. The remaining ones follow similarly.

Assume first that $y$ has dyadic coordinates, and let $n$ be large enough that $y$ is occupied by a vertex of $G_n$. Clearly, $h_n(y)$ is at least the probability for simple random in $G_n$ to hit $t_n$ before hitting $\partial G_n\setminus \{t_n\}$. Notice that simple random walk in $G_n$ up to the fist hitting time of $\partial G_n$ behaves like simple random in $2^{-n} \cdot \lat$ up to the first hitting time of $\partial \Omega_n$. Hence $h_n(y)$ is at least the probability $\Pr_{n,y}(\tau_{T_n}=\tau_{\partial \Omega_n})$, where $\Pr_{n,y}$ denotes the probability measure of simple random walk in $2^{-n} \cdot \lat$ starting from $w$.

We will now prove that 
\labtequ{prob}{$\Pr_{n,y}(\tau_{T_n}=\tau_{\partial G_n})$ converges to $\Pr_y(\tau_T=\tau_{\partial \Omega})$,} using the weak convergence in the locally uniform topology of $W_n$ to Brownian motion. Indeed, there is a coupling of the simple random walk and Brownian motion in the same probability space, such that $W_n$ converges almost surely to Brownian motion in the locally uniform topology by virtue of Skorokhod's representation theorem \cite{Bill}. Notice that a priori it is possible for $W_n$ to exit $\Omega_n$ at $T_n$ for every $n$ large enough, even though $W$ exits $\Omega$ at $\partial \Omega\setminus T$. Our aim is to show that this is almost surely never the case. 

To this end, let $U\in \{T,B,L,R\}$ denote the boundary arc first visited by Brownian motion, which is almost surely well-defined since $W$ exits $\Omega$ at $\{x_1, x_2, x_3, x_4\}$ with probability $0$. By the almost sure continuity of the Brownian paths, there is a number $\delta>0$ such that the Hausdorff distance between the compact sets $\{W(t), t\in [0,\tau_{\partial \Omega}+\delta]\}$ and $(\partial \Omega\setminus U)\cup \{x_1, x_2, x_3, x_4\}$ is strictly positive. Hence for every $n$ large enough, the distance between the sets $\{S_n(t), t\in [0,4^n (\tau_{\partial \Omega}+\delta)]\}$ and $\partial \Omega_n \setminus U_n$ is strictly positive. For every point $p\in\partial \Omega$, Brownian motion from $p$ exits $\overline{\Omega}$ immediately, i.e. 
$$\Pr_p(\inf \{t>0 \mid W(t)\in \mathbb{C}\setminus \overline{\Omega}\}=0)=1$$
by \Lr{regular} below.
Therefore, there is some $t$ between $\tau_{\partial \Omega}$ and $\tau_{\partial \Omega}+\delta$ such that $W(t)$ lies in the complement of $\overline{\Omega}$ by the strong Markov property. We can now deduce that for every $n$ large enough, $S_n(t)$ lies in the complement of $\overline{\Omega}$ for some $t$ between $4^n\tau_{\partial \Omega}$ and $4^n(\tau_{\partial \Omega}+\delta)$, hence hits $\partial \Omega_n$ before time $4^n(\tau_{\partial \Omega}+\delta)$. Consequently, $\tau_{U_n}=\tau_{\partial \Omega_n}$ for $S_n$ when $n$ is large enough, because $S_n$ does not hit $\partial \Omega_n \setminus U_n$ by time $4^n(\tau_{\partial \Omega}+\delta)$. This implies that the indicator of the event that $W_n$ exits $\partial \Omega_n$ at $U_n$ converges almost surely to the indicator of the event that $W$ exits $\partial \Omega$ at $U$. Taking expectations we obtain \eqref{prob}. 

Thus when $y$ has dyadic coordinates, the desired inequality\\ $v(y)\geq \Pr_y(\tau_T=\tau_{\partial \Omega})$ follows from the convergence of $h_{k_n}$ to $v$. The continuity of both $v$ and $\Pr_y(\tau_T=\tau_{\partial \Omega})$ gives the inequality for all $y$ in $\Omega$.

To obtain the assertion of the lemma, it remains to show the convergence of $\Pr_y(\tau_U=\tau_{\partial \Omega})$ to $1$ as $y$ tends to $z$. Consider the conformal map from $\Omega$ to $(0,M)\times (0,1)$ of \Tr{main}, where $M=1/E$, and notice that it maps any arc $U\in \{T,B,L,R\}$ to $U'$. Since Brownian motion is conformally invariant \cite{MorPer}, it suffices to prove the assertion when $\Omega$ is the rectangle $(0,M)\times (0,1)$, $U$ is its top side, and $z$ is some interior point of the top side. Write $y=y_1+iy_2$, and let $W=W_1+iW_2$ be our $2$-dimensional Brownian motion, where $W_1$ and $W_2$ are independent $1$-dimensional Brownian motions. Let also $\Pr_{W_i,y_i}$, $i=1,2$ denote the probability measure of $W_i$ starting from $y_i$. Notice that if $W_2$ hits $1$ before $0$, and $W_1$ hits $0$ or $M$ after $W_2$ hits $1$, then $W$ exits the rectangle from the top. The probability
$\Pr_{W_2,y_2}(\tau_1<\tau_0)$ of the first event is equal to $y_2$ (see e.g. \cite[Theorem 2.45]{MorPer}),
which converges to $1$ as $y$ tends to $z$.
Moreover, for every $r>0$ we have $$\Pr_{W_2,y_2}(\tau_1\leq r)=\Pr_{W_2, y_2}(|W_2(r)|\geq 1-y_2)=2\Phi\Big(\frac{1-y_2}{\sqrt{r}}\Big)$$
$$\Pr_{W_1,y_1}(\tau_0\leq r)=\Pr_{W_1,y_1}(|W_1(r)|\geq y_1)=2\Phi\Big(\frac{y_1}{\sqrt{r}}\Big)$$
$$\Pr_{W_1,y_1}(\tau_M\leq r)=\Pr_{W_1,y_1}(|W_1(r)|\geq M-y_1)=2\Phi\Big(\frac{M-y_1}{\sqrt{r}}\Big)$$ by the Reflection Principle (see \cite[Thoerem 2.18]{MorPer}), where
$\Phi$ is the cumulative distribution of the standard Gaussian random variable.
Choosing $r=1-y_2$ we obtain that $\Pr_{W_2,y_2}(|W_2(r)|\geq 1-y_2)$ converges to $1$ as $y$ converges to $z$. On the other hand, since both $y_1$ and $M-y_1$ remain bounded away from $0$, the probabilities $\Pr_{W_1,y_1}(|W_1(r)|\geq y_1)$ and
$\Pr_{W_1,y_1}(|W_1(r)|\geq M-y_1)$ converge to $0$. By the union bound the probability $\Pr_{W_1,y_1}(\tau\leq r)$ converges to $0$ as well, where $\tau$ is the minimum of $\tau_0$ and $\tau_M$. Thus with probability converging to $1$ as $y$ converges to $z$, $W_2$ hits $1$ before $0$, and $W_1$ hits $0$ or $M$ after $W_2$ hits $1$. This proves the desired convergence.
\end{proof}

We now prove the lemma mentioned in the proof of \Lr{limit points} above.

\begin{lemma}\label{regular}
For every $p\in\partial \Omega$ we have 
$\Pr_p(\inf \{t>0 \mid W(t)\in \mathbb{C}\setminus \overline{\Omega}\}=0)=1$.
\end{lemma}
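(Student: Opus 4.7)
The plan is to combine Blumenthal's 0-1 law with a regularity criterion for boundary points. Set $U:=\C\setminus\overline{\OO}$ and $\tau:=\inf\{t>0:W(t)\in U\}$; the event $\{\tau=0\}$ lies in the germ $\sigma$-field $\mathcal{F}_{0+}$ of the Brownian motion, so by Blumenthal's 0-1 law it has probability $0$ or $1$, and the task reduces to showing that it is positive.

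For positivity I would exploit the fact that $\partial\OO$ is a Jordan curve through $p$. Parametrising $\partial\OO$ as a continuous loop $\gamma$ with $\gamma(s_0)=p$ and following $\gamma$ outward from $s_0$ until the first time the image reaches $\partial B(p,2^{-n})$ produces a sub-arc of $\partial\OO$ contained in $\overline{\OO}\cap B(p,2^{-n})$ that meets both $\partial B(p,2^{-n})$ and $\partial B(p,2^{-n-1})$; in particular it is a continuum of diameter at least $2^{-n-1}$ crossing the dyadic annulus $A_n:=B(p,2^{-n})\setminus B(p,2^{-n-1})$. Using the classical lower bound relating the logarithmic capacity of a continuum to its diameter, this gives $\operatorname{cap}(\overline{\OO}\cap A_n)\ge c\cdot 2^{-n}$ uniformly in $n$. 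Feeding this into Wiener's criterion for the regularity of a boundary point of a planar domain (applied to the domain $U$) shows that $p$ is regular for $U$, which by Kakutani's probabilistic interpretation is exactly $\Pr_p(\tau=0)=1$.

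An alternative route more in keeping with the paper's complex-analytic tools is via the exterior Riemann map. The set $U\cup\{\infty\}$ is a simply connected domain on the Riemann sphere (since $\partial\OO$ is a Jordan curve), so it admits a conformal equivalence $\phi:U\cup\{\infty\}\to\overline{\C}\setminus\overline{\D}$ fixing $\infty$. The exterior version of Caratheodory's theorem (available because $\partial \OO$ is a Jordan curve) extends $\phi$ to a homeomorphism of the closures, sending $p$ to some point $q\in\partial\D$. Conformal invariance of planar Brownian motion then transports the question to whether BM from $q$ enters $\C\setminus\overline{\D}$ immediately, which is immediate from rotational symmetry together with Blumenthal's 0-1 law.

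The main obstacle in either route is making the argument precise at the boundary point $p$ itself. The Wiener-criterion route requires a careful verification of the capacity bound on every scale, which goes slightly beyond the potential-theoretic vocabulary developed earlier in the paper. The conformal-map route is conceptually cleaner but needs a workaround for the fact that the standard statement of conformal invariance of BM is for processes started in the interior: one can bypass this by applying the strong Markov property at the first entry of $W$ into $U$ and transferring the ``entered immediately'' event via the continuity of $\phi$ across $\partial\OO$.
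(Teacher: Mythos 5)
Your first route has a gap at its heart which is exactly the issue the lemma is designed to resolve. Bounding $\operatorname{cap}(\overline{\Omega}\cap A_n)$ from below via sub-arcs of $\partial\Omega$ and applying Wiener's criterion to the domain $U=\C\setminus\overline\Omega$ (whose complement is $\overline\Omega$) yields that $p$ is a regular boundary point of $U$, that is, that Brownian motion from $p$ immediately hits $\C\setminus U=\overline\Omega$. Equivalently, reading the same capacity bound as regularity of $p$ as a boundary point of $\Omega$, it gives immediate hitting of $\C\setminus\Omega=\partial\Omega\cup U$. Neither statement is what the lemma asserts; both are compatible with the Brownian motion touching $\partial\Omega$ at arbitrarily small positive times without ever crossing into the open exterior $U$. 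To run a capacity argument for the actual statement you would need lower bounds on $\operatorname{cap}(U\cap A_n)$, and the arcs you construct lie on $\partial\Omega$, not in $U$. Your second route correctly recognises its own missing step, but the proposed workaround --- applying the strong Markov property at the first entry time into $U$ --- is circular, since that stopping time being $0$ is precisely the content of the lemma.

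The paper's proof is much shorter and avoids both potential theory and exterior uniformisation. It invokes the standard fact that every boundary point of a Jordan domain is regular, giving immediate hitting of $\C\setminus\Omega$ (the ``slightly weaker'' statement, as the paper itself notes). It then uses the accessibility of $p$ from the exterior to build a Jordan curve $\gamma$ through $p$ that surrounds $\Omega$ and whose remaining points lie in $\C\setminus\overline\Omega$, and applies the same regularity fact to the Jordan domain $\Omega'$ bounded by $\gamma$. This yields immediate hitting of $\C\setminus\Omega'\subset\{p\}\cup(\C\setminus\overline\Omega)$, and since planar Brownian motion almost surely never revisits its starting point at positive times, the conclusion follows. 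Rescuing your capacity route would require importing exactly this exterior-accessibility input in order to place continua inside $\C\setminus\overline\Omega$ at every small scale around $p$.
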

\begin{proof}
Since $\partial \Omega$ is a Jordan curve, every boundary point $p\in\partial \Omega$ is regular for Brownian motion \cite[Problem 2.16]{KaraSh}, i.e.
$$\Pr_p(\inf \{t>0 \mid W(t)\in \mathbb{C}\setminus \Omega\}=0)=1,$$
which is slightly weaker than the desired assertion. To remedy this, let $\gamma$ be a Jordan curve passing through $p$ with the property that every other point of $\gamma$ lies in $\mathbb{C}\setminus \overline{\Omega}$. The existence of such a curve follows easily from the fact that $p$ is accessible by an arc in $\mathbb{C}\setminus \overline{\Omega}$, since $\partial \Omega$ is a Jordan curve. Let $\Omega'$ be the bounded component of $\C\setminus \gamma$. We have that 
$$\Pr_p(\inf \{t>0 \mid W(t)\in \mathbb{C}\setminus \Omega'\}=0)=1$$
as above.
Since every point of $\mathbb{C}\setminus\Omega'$ other than $p$ lies in $\mathbb{C}\setminus \overline{\Omega}$, and Brownian motion almost surely never visits $p$ after time $0$, we obtain the desired result.
\end{proof}

\subsection{Proof of injectivity}

\Lr{limit points} implies that $f$ is not a constant function in $\Omega$, hence we obtain

\begin{corollary}\label{zeros}
The number of zeros of $f'$ in any compact subset $K$ of $\Omega$ is finite.
\end{corollary}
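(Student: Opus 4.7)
The plan is a short application of the identity theorem for holomorphic functions, with the only substantive step being the extraction of non-constancy of $f$ from \Lr{limit points}.

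First I would show that $f$ is not constant on $\Omega$. Pick any point $z_T$ in the interior of the top arc $T$ and any point $z_B$ in the interior of the bottom arc $B$. By \Lr{limit points}, every $f$-limit point of $z_T$ lies in $T'=[0,I]\times\{1\}$, while every $f$-limit point of $z_B$ lies in $B'=[0,I]\times\{0\}$. These two sets are disjoint (their imaginary parts differ), so any two sequences witnessing the existence of such limit points produce two different accumulation values of $f$ in $\Omega$. Consequently $f$ cannot be constant.

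Since $f$ is holomorphic on the connected open set $\Omega$ by \Lr{holomorphic}, its derivative $f'$ is also holomorphic on $\Omega$, and by the previous paragraph it is not identically zero (a holomorphic function with vanishing derivative on a connected open set is constant). Therefore, by the identity theorem for holomorphic functions, the zero set $Z=\{z\in\Omega\mid f'(z)=0\}$ has no accumulation point in $\Omega$; equivalently, every zero of $f'$ is isolated.

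Now let $K\subset\Omega$ be compact. If $Z\cap K$ were infinite, the Bolzano--Weierstrass theorem would produce an accumulation point of $Z$ in $K\subset\Omega$, contradicting the isolated-zeros property established in the previous step. Hence $Z\cap K$ is finite, which is the claim. The only non-routine ingredient is the use of \Lr{limit points} to rule out $f\equiv\text{const}$; everything else is standard complex analysis, and I do not anticipate further obstacles.
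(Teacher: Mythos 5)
Your proof is correct and follows essentially the same route as the paper: both derive non-constancy of $f$ from \Lr{limit points} and then invoke the identity theorem (the paper phrases it as a proof by contradiction starting from infinitely many zeros of $f'$ in $K$, you argue forward, but the ingredients and logic are the same). Your proposal does spell out the non-constancy step more explicitly than the paper, which simply asserts that \Lr{limit points} implies $f$ is not constant.
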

\begin{proof}
Consider a compact set $K\subset \Omega$, and assume that $f'$ has infinitely many zeros in $K$. This implies that the zeros of $f'$ have an accumulation point in $K\subset \Omega$. It is a standard fact that then $f'$ is identically zero in $\Omega$. Hence $f$ is a constant function in $\Omega$, contradicting \Lr{limit points}.
\end{proof}

In the next lemma we prove that $f$ is injective in $\Omega$. We offer two proofs of this statement, the first more analytic and the second more combinatorial

\begin{lemma}\label{injective}
The function $f:\Omega\to \C$ is injective.
\end{lemma}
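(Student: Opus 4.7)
The plan is to prove injectivity via the argument principle, using the boundary behavior of \Lr{limit points} to identify $f$ as a proper degree-one holomorphic map onto the open rectangle $R := (0,I) \times (0,1)$.

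First, I would establish properness. The maximum principle applied to the bounded harmonic functions $u$ and $v$, together with non-constancy (which is immediate from \Cr{zeros} and \Lr{limit points}), gives $f(\Omega) \subseteq R$. If $y_k \in \Omega$ converges to a point of $\partial \Omega$, then every subsequential limit of $f(y_k)$ lies in $\partial R$ by \Lr{limit points}. Hence $f : \Omega \to R$ is proper, so $f^{-1}(w_0)$ is a finite subset of $\Omega$ for every $w_0 \in R$ (discreteness because $f$ is non-constant holomorphic; finiteness because the set cannot accumulate on $\partial\Omega$).

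Next, I would apply the argument principle on a smooth Jordan exhaustion $\Omega_m \nearrow \Omega$ with $\overline{\Omega_m} \subset \Omega$. For a regular value $w_0 \in R$ of $f$ (such values are dense by Sard's theorem) and $m$ large enough that $w_0 \notin f(\overline{\Omega_{m+1}} \setminus \Omega_m)$ (possible by properness), the argument principle gives
\[
\#f^{-1}(w_0) \;=\; \#\{z \in \Omega_m : f(z) = w_0\} \;=\; \frac{1}{2\pi i}\oint_{\partial \Omega_m} \frac{f'(z)}{f(z)-w_0}\,dz \;=:\; N_m(w_0).
\]
To compute $N_m(w_0)$, I would first upgrade \Lr{limit points} to a uniform statement: for every $\epsilon > 0$ and every boundary arc $U \in \{T, B, L, R\}$ of $\partial \Omega$ (minus small neighborhoods of the corners $x_i$), there is a neighborhood $V$ of $U$ in $\overline{\Omega}$ whose $f$-image lies in the $\epsilon$-neighborhood $(U')_\epsilon$ of the corresponding side $U'$ of $\partial R$. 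This follows from \Lr{limit points} by a standard compactness argument. Parametrizing $\partial \Omega_m$ so that its four sub-arcs correspond, via nearest-point projection, to the four arcs of $\partial \Omega$, the image $f(\partial \Omega_m)$ is then a loop close to $\partial R$ which traverses neighborhoods of $T', R', B', L'$ in the correct clockwise cyclic order and with appropriately matched corner neighborhoods. For $m$ large, such a loop is freely homotopic in $R \setminus \{w_0\}$ to the standard once-around parametrization of $\partial R$, which has winding number $1$ around $w_0$. Hence $N_m(w_0) = 1$.

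Therefore $f$ attains every regular value in $R$ exactly once. A critical point of $f$ would force local multiplicity $\geq 2$, and hence multiple preimages for nearby regular values, contradicting the previous paragraph. So $f$ has no critical points in $\Omega$, and $f : \Omega \to R$ is a holomorphic bijection. The main obstacle is the third step: rigorously matching the arc decomposition of $\partial \Omega$ with that of $\partial \Omega_m$ and extracting the correct homotopy class of $f|_{\partial \Omega_m}$ from \Lr{limit points}. The uniform strengthening indicated above should suffice, but handling the four corners $x_i$ — where two arcs meet and \Lr{limit points} does not directly apply — will require some additional bookkeeping, e.g.\ by showing that the contribution of small neighborhoods of the corners to the winding integral can be made arbitrarily small by taking $m$ large.
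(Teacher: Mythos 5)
Your plan is a genuinely different route from the paper. The paper gives two proofs of \Lr{injective}: an analytic one that exploits \Cr{zeros}, local injectivity of holomorphic maps, and the combinatorial plane-graph structure of the tiling (via \Lr{local}), and a purely combinatorial one that builds paths in $G_n\cup G_n^*$ on which $s_n$ stays close to $f(z)=f(y)$ and extracts a set of accumulation points on which $f$ is constant. You instead go via the argument principle, identifying $f$ as a proper degree-one map onto $R=(0,I)\times(0,1)$. This is a classical strategy and would be cleaner if it went through, but there is a genuine gap which you acknowledge but significantly underestimate.

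The gap is the behaviour of $f$ near the four corners $x_1,\dots,x_4$, and it infects both pivotal steps of your argument. For properness you write ``every subsequential limit of $f(y_k)$ lies in $\partial R$ by \Lr{limit points}''; but \Lr{limit points} only applies to sequences converging to a point of $\partial\Omega\setminus\{x_1,\dots,x_4\}$, and the inequalities \eqref{ineq} alone do not rule out $f(y_k)$ accumulating at an interior point of $R$ when $y_k\to x_i$ (near $x_1=T\cap L$, for instance, \eqref{ineq} only forces $u(y_k)=o(1)\cdot I$ \emph{or} $v(y_k)=1-o(1)$ depending on $\Pr_{y_k}(\tau_T)$, not both). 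The same issue enters the winding-number computation: the sub-arc of $\partial\Omega_m$ near a corner maps to an arc in $R$ of a priori uncontrolled length and winding, so its contribution to $\oint d\arg(f-w_0)$ cannot be dismissed as bookkeeping. The paper controls corner behaviour in \Lr{corners}, but its proof explicitly uses that $f$ is conformal, hence injective --- so you cannot cite it without circularity. What would fill the gap is a length--area argument in the style of \Lr{corners} but without the injectivity hypothesis, using instead that the Dirichlet integral $\iint_\Omega |f'|^2$ is finite (which does not follow from injectivity but can be extracted from the boundedness of the discrete energies $I^*_n$ in \Lr{bounded}); from the finiteness of $\int_0^{1/2} l(r)^2/r\,dr$ one gets circles $C(r_k)$ around $X_i=\phi^{-1}(x_i)$ whose $g$-images have length $\to 0$ and endpoints on $T'$ and $L'$, hence lie near $y_i$, and this is what makes both the properness and the homotopy-class claims rigorous. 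As written, your proposal omits this piece, which is precisely the hard part, so the proof is incomplete.

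(Two minor remarks: Sard's theorem is overkill since non-constant holomorphic maps have only isolated critical points, as in \Cr{zeros}; and the uniform strengthening of \Lr{limit points} you propose is indeed a routine compactness argument and is fine.)
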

\begin{proof}[Proof 1]
Consider $z,y\in \Omega$ with $z\neq y$. Let $\Delta$ be a small enough open disk centred at $z$ such that $\overline{\Delta}\subset \Omega$ and $y\not \in \overline{\Delta}$. Since $f'$ has finitely many zeros in $\overline{\Delta}$ by \Cr{zeros}, we can reduce the radius of $\Delta$ if necessary to ensure that $f'$ has no zeros in $\overline{\Delta}\setminus \{z\}$. 

We claim that there is a point $p$ in $\Omega\setminus \overline{\Delta}$ such that $f(p)$ lies in the unbounded region of $\mathbb{C}\setminus f(\overline{\Delta})$. Indeed, $f(\overline{\Delta})$ is a compact set, as the continuous image of a compact set. Moreover, $f(\Omega)$ is an open set by the Open mapping theorem for holomorphic functions \cite{PapaRudin}, and contains $f(\overline{\Delta})$. Therefore, $f(\Omega)$ contains a point in the unbounded region of $\mathbb{C}\setminus f(\overline{\Delta})$, which proves the claim.

We can always assume that $p$ has dyadic coordinates and $f'(p)\neq 0$, since otherwise we can replace $p$ with another nearby point $q$ with dyadic coordinates, such that $q\in\Omega\setminus \overline{\Delta}$, $f'(q)\neq 0$ and $f(q)$ lies in the unbounded component of $\mathbb{C}\setminus f(\overline{\Delta})$. 

Let us now choose a connected open set $S$ containing $\Delta$, $y$ and $p$, such that $\overline{S}\subset \Omega$. It is not hard to see that $S$ can be chosen in such a way that $f'$ has no zeros in $\overline{S}\setminus \{z,y\}$ by virtue of \Cr{zeros}. This implies that $f$ is locally injective in $\overline{S}\setminus \{z,y\}$ \cite[Theorem 10.30]{PapaRudin}. Consider a point $q$ in $\Delta\setminus \{z\}$. For every $x\in S$ with $x\neq q$ we have $f(x)\neq f(q)$ by \Lr{local} below. This implies that for every $x\in S\setminus \Delta$, $f(x)$ does not lie in $f\big(\Delta\setminus \{z\}\big)$. Moreover, $f$ maps open sets to open sets by the Open mapping theorem for holomorphic functions. It follows that $f(y)\neq f(z)$, because otherwise some nearby point of $y$ is mapped under $f$ to the open set $f\big(\Delta\setminus \{z\}\big)$. This proves the desired assertion.
\end{proof}

We now prove the lemma mentioned in the proof of \Lr{injective} above.

\begin{lemma}\label{local}
Consider a point $q$ in $\Delta\setminus \{z\}$. Then for every $x\in S$ with $x\neq q$ we have $f(x)\neq f(q)$.
\end{lemma}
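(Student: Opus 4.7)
My approach is to count preimages of $f(q)$ via the argument principle applied on $\partial\Delta$, using crucially the hypothesis that $f(p)$ lies in the unbounded component of $\mathbb{C}\setminus f(\overline\Delta)$. I will proceed in two main stages: first establish that $q$ is the unique preimage of $f(q)$ in $\Delta$, then extend this to all of $S$.

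For the first stage, note that since $f'(q)\neq 0$, $q$ is a simple zero of the holomorphic function $g(x):=f(x)-f(q)$. After slightly perturbing the radius of $\Delta$ if necessary so that $f(q)\notin f(\partial\Delta)$ (only countably many radii are excluded, and the hypothesis on $f(p)$ is stable under this perturbation by continuity), the argument principle identifies the number of zeros of $g$ in $\Delta$, counted with multiplicity, with the winding number $W$ of $f|_{\partial\Delta}$ around $f(q)$; moreover, the analogous winding number around $f(p)$ vanishes, so the locally constant function $N_\Delta(w):=\#(f^{-1}(w)\cap\Delta)$ (with multiplicity) is zero throughout the unbounded component of $\mathbb{C}\setminus f(\partial\Delta)$. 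The crucial claim is that $W=1$. Suppose instead $W\geq 2$. Then either a second preimage $x_0\in\Delta\setminus\{q,z\}$ of $f(q)$ exists, or $z$ is a preimage of $f(q)$ of multiplicity contributing the extra count; in either case, by the open mapping theorem and local injectivity of $f$ on $\overline\Delta\setminus\{z\}$, an open neighborhood of $f(q)$ consists of values each of which has at least two preimages in $\Delta$. A topological analysis of the curve $f|_{\partial\Delta}$, combined with the vanishing of the winding number in the unbounded component containing $f(p)$, then yields a contradiction.

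For the second stage, suppose there exists $x_0\in S\setminus\overline\Delta$ with $f(x_0)=f(q)$. Since $f$ is an open map, a neighborhood of $f(q)$ consists of values with preimages near $x_0$ in $S\setminus\overline\Delta$, in addition to the unique preimage near $q$ provided by the first stage; hence each such value has at least two preimages in $S$. Using local constancy of the count $\#(f^{-1}(w)\cap S)$ on the complement of the singular set $f(\partial S)\cup\{f(z),f(y)\}$, and joining $f(q)$ to $f(p)$ along a suitable path avoiding this set (noting that by the first stage $f(p)$ has no preimage in $\overline\Delta$, so all its preimages in $S$ are accounted for by $S\setminus\overline\Delta$), one reaches a contradiction with the count at $f(p)$.

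I expect the main obstacle to be the winding number argument of the first stage: ruling out $W\geq 2$ is delicate because $f|_{\partial\Delta}$ could a priori self-intersect, so the argument must leverage the specific location of $f(p)$ in the unbounded component of $\mathbb{C}\setminus f(\overline\Delta)$ rather than appealing to the argument principle in isolation. The extension step in the second stage is comparatively routine once the first stage is established.
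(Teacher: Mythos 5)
Your argument-principle approach cannot work, because the conclusion you are trying to establish would be \emph{false} for a general holomorphic function satisfying the analytic hypotheses in play. Take $f(z)=z^2$ near $z=0$ (with $z=0$ the zero of $f'$, a small disk $\Delta$ around $0$, and $p$ any sufficiently distant point, so that $f(p)$ lies in the unbounded component of $\mathbb C\setminus f(\overline\Delta)$). Then for $q\in\Delta\setminus\{0\}$ we have $f(q)=f(-q)$ with both $q,-q\in\Delta\subset S$, so the lemma's conclusion fails, even though all of the analytic hypotheses — $f$ holomorphic, $f'$ nonzero on $\overline\Delta\setminus\{z\}$, $f(p)$ in the unbounded component, $f'(p)\neq 0$ — hold. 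In that example the winding number of $f|_{\partial\Delta}$ around $f(q)$ is genuinely $2$, while the winding number around $f(p)$ is $0$; these coexist happily. Indeed, for \emph{any} closed curve the winding number is $0$ on the unbounded component, so the hypothesis about $f(p)$ gives you no leverage at all on the winding number around $f(q)$. The gap you flagged — ruling out $W\geq 2$ — is not merely delicate; it is unfillable within a purely complex-analytic framework.

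The paper's proof closes this gap with a combinatorial input that your plan never invokes: the square tiling provides a \emph{plane embedding}. For large $k_n$, the subgraph $\Gamma_{k_n}$ of $G_{k_n}$ inside $S\setminus(\Delta_1\cup\Delta_2)$ is mapped by $s_{k_n}$ to a plane isomorphic copy $\Gamma'_{k_n}$ inside $[0,I^*_{k_n}]\times[0,1]$ (see \Sr{constr}). Planarity then forces every vertex in the component of $p$ in $G_{k_n}\setminus Q_{k_n}$ to land in the unbounded face of $s_{k_n}(Q_{k_n})$, while $s_{k_n}(q)$ lands in the bounded face of the image of the boundary cycle of $Q$. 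This yields a uniform lower bound on $|s_{k_n}(x)-s_{k_n}(q)|$, which survives the limit and gives $f(x)\neq f(q)$. In short, it is precisely the discrete planarity of the tiling — not the argument principle — that excludes the $z\mapsto z^2$ pathology, and any proof of this lemma must use that (or some equally specific) structural property of $f$ as a limit of square-tiling maps.
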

\begin{proof}
Let $Q\subset \Delta\setminus \{z\}$ be a small enough square containing $q$ in its interior, such that $f$ is injective at an open neighbourhood of $Q$, and all corners of $Q$ have dyadic coordinates. Consider small enough open disks $\Delta_1$ and $\Delta_2$ centred at $z$ and $y$, respectively, such that the set $S\setminus \big(\Delta_1 \cup \Delta_2\big)$ is connected and contains $Q$. Since $f'$ has no zeros in the closure of $S\setminus \big(\Delta_1 \cup \Delta_2\big)$ and $|f'|$ is a continuous function, there is a constant $c>0$ such that $|f'|$ is bounded from below by $c$ in the closure of $S\setminus \big(\Delta_1 \cup \Delta_2\big)$. The partial derivatives of $h_{k_n}$ converge uniformly in $S\setminus \big(\Delta_1 \cup \Delta_2\big)$ to the partial derivatives of $v$, and $$|f'|=\sqrt{\Big(\dfrac{\partial v}{\partial x}\Big)^2+\Big(\dfrac{\partial v}{\partial y}\Big)^2}$$ by the Cauchy-Riemann equations. It follows that for every $k_n$ large enough, 
$$\sqrt{\Big(\dfrac{\partial h_{k_n}}{\partial x}\Big)^2+\Big(\dfrac{\partial h_{k_n}}{\partial y}\Big)^2}\geq c/2$$ in $S\setminus \big(\Delta_1 \cup \Delta_2\big)$, implying that for every such $k_n$ and for every vertex $x$ of $G_{k_n}$ lying in $S\setminus \big(\Delta_1 \cup \Delta_2\big)$, some of the two partial derivatives of $h_{k_n}$ at $x$ is not $0$. In particular, the line segment $I_x$ is not a single point.

Assume that $k_n$ is large enough that the above property is satisfied, and let $\Gamma_{k_n}$ be the subgraph of $G_{k_n}$ spanned by those vertices lying in the set $S\setminus \big(\Delta_1 \cup \Delta_2\big)$. As described in \Sr{constr}, we can construct a plane isomorphic copy of $\Gamma_{k_n}$ living in $[0,I^*_{k_n}]\times [0,1]$, which we will denote by $\Gamma'_{k_n}$, in such a way that the restriction of $s_{k_n}$ to the vertex set of $\Gamma_{k_n}$ lifts to an isomorphism between $\Gamma_{k_n}$ and $\Gamma'_{k_n}$. 

Pick some $x\in S$ with dyadic coordinates which does not belong to $Q$, $\Delta_1$ or $\Delta_2$. Let $Q_{k_n}$ be the subgraph of $\Gamma_{k_n}$ spanned by the vertices in $Q$, and let $C_{k_n}$ be the subgraph of $\Gamma_{k_n}$ spanned by the vertices in $\partial Q$. Increasing $k_n$ if necessary we can assume that the following hold:
\begin{enumerate}
\item The corners of $Q$ are occupied by vertices of $G_{k_n}$, hence $C_{k_n}$ spans a cycle.
\item The points $p$ and $x$ are occupied by vertices of $G_{k_n}$.
\item There is a path in $\Gamma_{k_n}$ connecting $p$ and $x$ that does not intersect $Q_{k_n}$.
\item The point $s_{k_n}(p)$ belongs to the unbounded face of the subgraph of $\Gamma'_{k_n}$ spanned by the vertices in $s_{k_n}(Q_{k_n})$.
\end{enumerate} 
The third item follows from the connectedness of $S\setminus Q$, and the last item follows from the fact that $f(p)$ lies in the unbounded component of $\mathbb{C}\setminus f(Q)$ by our choice of $p$. We claim that for those $k_n$ and for every vertex $r$ in the component of $p$ in $G_{k_n}\setminus Q_{k_n}$,
\labtequ{claim}{$s_{k_n}(r)$ lies in the unbounded face of the subgraph of $\Gamma'_{k_n}$ spanned by the vertices in $s_{k_n}(Q_{k_n})$.}
To see this, notice that if $s_{k_n}(r)$ lies in a bounded face of the subgraph spanned by the vertices in $s_{k_n}(Q_{k_n})$, then any path in $\Gamma'_{k_n}$ connecting $s_{k_n}(r)$ to $s_{k_n}(p)$ has to intersect $s_{k_n}(Q_{k_n})$, but there is a path in $\Gamma_{k_n}$ connecting $r$ and $p$ that does not intersect $Q_{k_n}$. This contradicts the fact that $s_{k_n}$ is an isomorphism between 
$\Gamma_{k_n}$ and $\Gamma'_{k_n}$, hence $s_{k_n}(r)$ lies in the unbounded face of the subgraph spanned by the vertices in $s_{k_n}(Q_{k_n})$, as claimed. In particular, $s_{k_n}(x)$ lies in the unbounded face of the subgraph of $\Gamma'_{k_n}$ spanned by the vertices in $s_{k_n}(Q_{k_n})$.

It is not hard to see that 
\labtequ{claim2}{$s_{k_n}(q)$ lies in the face bounded by the cycle spanned by $s_{k_n}(C_{k_n})$.} Assuming not, we can argue as above to deduce that all vertices in the interior of $Q$, which span a connected graph, are mapped under $s_{k_n}$ to the unbounded face of the cycle spanned by $s_{k_n}(C_{k_n})$. This leads to a contradiction as follows. Consider a corner $c$ of $Q$, and notice that the remaining vertices of $C_{k_n}$ together with a vertex in the interior of $Q$ are mapped under $s_{k_n}$ to the vertex set of a cycle that surrounds $s_{k_n}(c)$, and no other vertex of $\Gamma'_{k_n}$. Therefore, $s_{k_n}(c)$ has only two neighbours in $\Gamma'_{k_n}$, namely those in $s_{k_n}(C_{k_n})$, because $\Gamma'_{k_n}$ is a plane graph. However, $c$ has four neighbours in $\Gamma_{k_n}$. This contradiction implies \eqref{claim2}.

Combining \eqref{claim} with \eqref{claim2} we deduce that the distance of $s_{k_n}(x)$ from $s_{k_n}(q)$ is at least the Hausdorff distance between the cycle $s_{k_n}(C_{k_n})$, when viewed as a Jordan curve in the plane, and $s_{k_n}(q)$. Taking limits along $(k_n)$ we obtain that 
\labtequ{claim3}{the distance of $f(x)$ from $f(q)$ is at least the distance of $f(\partial Q)$ from $f(q)$.} Making the radii of the disks $\Delta_1$ and $\Delta_2$ approach $0$, we obtain that \eqref{claim3} holds for all $x\in S\setminus \{z,y,q\}$ with dyadic coordinates, and some appropriate square $Q$ depending on $x$. Combining the continuity of $f$ with the density of the set of points with dyadic coordinates, we obtain \eqref{claim3} for every $x\in S\setminus\{q\}$. The distance of $f(\partial Q)$ from $f(q)$ is strictly positive, as $f$ is injective at an open neighbourhood of $Q$. This implies the desired result.
\end{proof}

We proceed with our second proof of \Lr{injective}, for which we need the following lemma.

\begin{lemma}\label{max side}
Consider a point $z\in \Omega$, and let $\Delta\subset \Omega$ be a closed disk centred at $z$. Let $W(n)=W(n,z,\Delta)$ be the maximum side length of a square $S_e$ in the square tiling of $G_n$ over those edges $e$ with both endvertices in $\Delta$. Then $W(n)$ converges to $0$.
\end{lemma}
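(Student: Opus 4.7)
The plan is to observe that for an edge $e=xy$ of $G_n$, the square $S_e$ has side length equal to $|h_n(x)-h_n(y)|$ (the vertical height of $S_e$), and then to bound this quantity uniformly over edges with endvertices in $\Delta$ using the discrete derivative estimate of \Tr{derivative}. Since the edges of $2^{-n}\cdot\lat$ are axis-parallel of length $2^{-n}$, the side length of $S_e$ equals $2^{-n}$ times a first-order discrete partial derivative of $h_n$ at an endvertex of $e$, so reducing $W(n)$ to $0$ amounts to showing that these discrete partial derivatives are locally uniformly bounded independently of $n$.

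To set up the bound, let $\delta := d(\Delta,\partial\Omega)>0$, which is strictly positive since $\Delta$ is a closed disk contained in the open set $\Omega$. For $n$ large enough that $\sqrt{2}\,2^{-n}<\delta/2$, every vertex $x$ of $G_n$ lying in $\Delta$ is at Euclidean distance at least $\delta/2$ from $\partial\Omega$; in particular $x\notin T_n\cup B_n\cup L_n\cup R_n$, since the vertices of $\partial G_n$ are all within distance $\sqrt{2}\,2^{-n}$ of $\partial\Omega$. Consequently, if we translate and rescale by defining $\tilde h(v):=h_n(x+2^{-n}v)$ on $\lat$, then $\tilde h$ is harmonic (in the standard sense of $\lat$) on $D_m$ whenever the Euclidean ball of radius $\sqrt{2}\,2^{m-n}+2^{-n}$ around $x$ lies entirely in the interior of $G_n$. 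Choosing $m=n-c$ for a suitable constant $c=c(\Omega,\Delta)$ with $\sqrt{2}\,2^{-c}<\delta/4$ ensures this, and then \Tr{derivative} (together with the trivial bound $\|\tilde h\|_\infty\le 1$) yields
\[
|h_n(x+2^{-n})-h_n(x)|=|\tilde h(1,0)-\tilde h(0,0)|\le \frac{C}{2^m}=C\cdot 2^{c}\cdot 2^{-n},
\]
and the analogous bound for the vertical neighbour $x+2^{-n}i$. Thus, for every edge $e$ of $G_n$ with both endvertices in $\Delta$, the side length of $S_e$ is at most $C'\cdot 2^{-n}$ where $C'=C\cdot 2^c$ depends only on $\Omega$ and $\Delta$, so $W(n)\le C'\cdot 2^{-n}\to 0$.

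The only non-routine step is verifying that the relevant lattice ball around each $x\in\Delta\cap V(G_n)$ is composed of \emph{interior} vertices of $G_n$ (so that \Tr{derivative} applies and neither $t_n$ nor $b_n$ nor any vertex of $L_n\cup R_n$ appears); as indicated above this is entirely quantitative, reducing to the estimate $\sqrt{2}\,2^{-n}\ll\delta$. Note also that there is no need to treat the widths of the squares separately: $S_e$ being a square, its horizontal side length already coincides with $|h_n(x)-h_n(y)|$, which is what we have controlled.
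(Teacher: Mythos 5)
Your proposal is correct and follows essentially the same route as the paper: both reduce the side length of $S_e$ to $2^{-n}$ times a first-order discrete partial derivative of $h_n$ and then invoke \Tr{derivative} to bound it locally uniformly. The paper states this in two sentences (relying on the remark following \Tr{derivative} that the partial derivatives of $h_n$ are locally bounded after rescaling); you have simply made explicit the translation/rescaling and the verification that the relevant lattice neighbourhood of $x\in\Delta$ avoids $\partial G_n$, which the paper leaves implicit.
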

\begin{proof}
Notice that the side length of any square $S_e$ in the square tiling of $G_n$ is equal to either $2^{-n}\Bigl\lvert \dfrac{\partial h_n}{\partial x}(p)\Bigr\rvert$ or $2^{-n}\Bigl\lvert \dfrac{\partial h_n}{\partial y}(p)\Bigr\rvert$ for some vertex $p$ of $G_n$, depending on whether $e$ is a vertical or a horizontal edge. We know that there is a constant $C>0$ such that $\Bigl\lvert \dfrac{\partial h_{n}}{\partial x}(p)\Bigr\rvert \leq C$ and $\Bigr\rvert \dfrac{\partial h_{n}}{\partial y}(p)\Bigr\rvert \leq C$ for every $p$ in $\Delta$ by \Tr{derivative}. The desired assertion follows.
\end{proof}

We now give our second proof of \Lr{injective}.
%%%%%%
\begin{proof}[Proof 2]
Suppose, to the contrary, there are points $z,y\in \Omega$ with $z\neq y$ and $f(z)=f(y)=h'+ih$ for some $h\in [0,I]$, $h' \in [0,1]$.
\labtequ{neq 01}{We have $h'+ih\in (0,I)\times (0,1)$,}
by \eqref{ineq}, because all of $\Pr_y(\tau_T=\tau_{\partial \Omega})$, $\Pr_y(\tau_B=\tau_{\partial \Omega})$, $\Pr_y(\tau_L=\tau_{\partial \Omega})$ and\\ $\Pr_y(\tau_R=\tau_{\partial \Omega})$ are strictly positive.

Our aim is to find a countable set $X\subset \Omega$ that accumulates to either $z$ or $y$ on which $f$ is constant, as this contradicts the fact that $f$ is a non-constant holomorphic function.

For this, pick a sequence $(z_n)_{n\in\mathbb{N}}$ of vertices of $G_n$ \st\ $z_n$ is incident with a face or edge containing $z$. Pick a sequence $(y_n)_{n\in\mathbb{N}}$ of vertices of $G^*_n$, i.e.\ faces of $G_n$, \st\ $y_n$ contains $y$ in its closure. Let also $D_z,D_y\subset \Omega$ be two closed disks centred, at $z,y$, respectively. Then for every large enough $n$, $z_n$ belongs to $D_z$, and $y_n$ belongs to $D_y$. We will define a sequence of paths $P_n\subset \OO$ in $G_n \cup G^*_n$ along which the values of $f$ are closer and closer to $f(z)=f(y)$, and obtain the desired $X$ as a set of accumulation points of $P_n$.

\Lr{conv} implies that $\lim_n h_{k_n}(z_{k_n}) = h$.
Let $a_n:= Re(s_n(z_n))$ be the first coordinate of $z_n$ in the interpolation $s_n$ of the square tiling map as introduced in \Sr{def int}, and recall that $a_n$ was defined as the average of the $h'_n$ values of the faces incident with $z_n$. Therefore, by \Lr{conv} combined with \Lr{max side} we know that $a_{k_n}$ converges to $h'$. Similarly, we have $\lim_n h'_{k_n}(y_{k_n}) = h$, and recalling that $b_n:= H_{n}(y_n)$ is a convex combination of the $h_{k_n}$ values of the vertices incident with $y_n$, we obtain $\lim_n b_{k_n} = h$ by \Lrs{conv} and \ref{max side}. We will assume without loss of generality that $Im(s_n(z_n))\leq Im(s_n(y_n))$.

\begin{figure}[!ht] 
   \centering
   \noindent

\begin{overpic}[width=.6\linewidth]{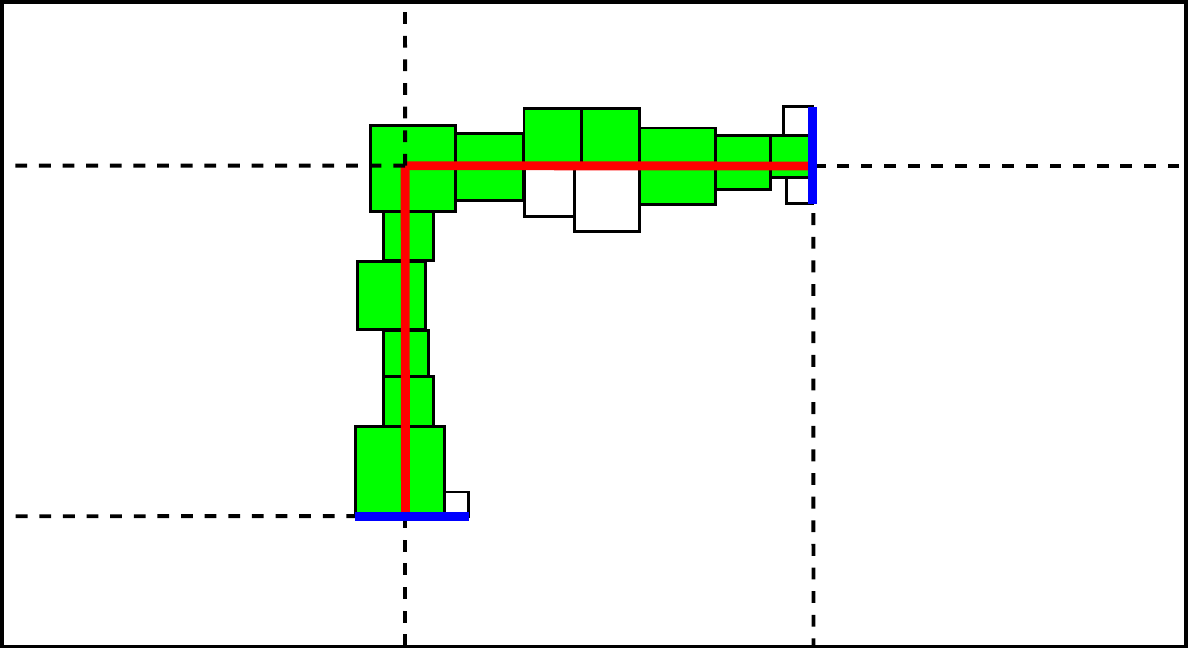} 
\put(-7,40){$b_n$}
\put(-16,10){$H_n(z_n)$}
\put(32.5,-5){$a_n$}
\put(63,-6){$h'_n(y_n)$}
\end{overpic}
%\vspace*{1cm}

\

\

\begin{overpic}[width=.6\linewidth]{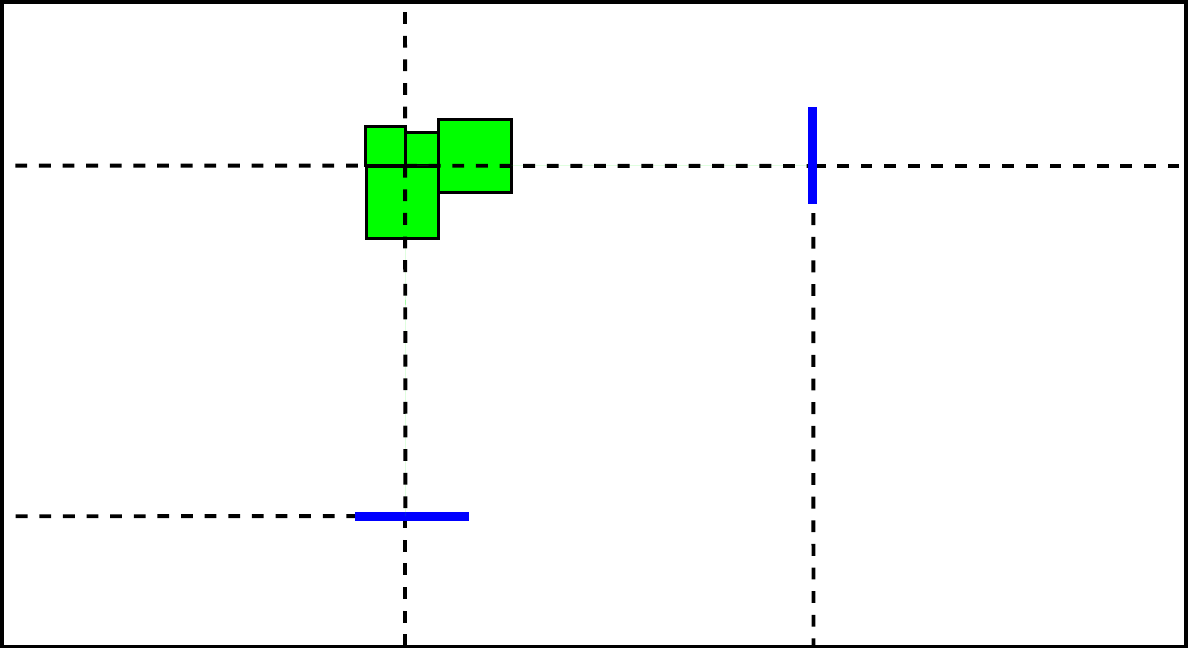} 
\end{overpic}
\caption{The situation in the proof of \Lr{injective}. The shaded squares depict the constructed path $P_n$. The top figure corresponds to the simpler case where $(a_n,b_n)$ lies in the interior of a square of the tiling. The bottom figure shows how $P_n$ is adapted locally in the other case.}\label{path}
\end{figure}

In order to construct $P_n$, we will consider two cases according to the behaviour of $s_n$ on $D_z,D_y$. Consider at first the case that some $I_w$, $w\in \{z_n,y_n\}$ is trivial, i.e. a single point, and furthermore, that there is
\labtequ{P'_n}{a path $P'_n$ in $G_n$ or $G^*_n$ connecting $w$ to the complement of $D_w$, such that $I_u$ is a single point for every vertex $u$ of $P'_n$.} Then we let $P_n=P'_n$. 

It remains to consider the case that no such path $P'_n$ exists. We will obtain $P_n$ by combining two paths $Q_n\subset G_n$ and $Q_n^*\subset G_n^*$. We start the construction of $Q_n$ by the set of edges $e$ whose square $S_e$ in the square tiling of $G_n$ has non-zero width, and contains a point ---possibly at its boundary--- with coordinates $(a_n,\eta)$ with $\eta \in [h_n(z_n), b_n]$. If for some value of $\eta$ there are two such edges $e,f$, which can only be the case when both $S_e,S_f$ are tangent on either side the vertical line $L$ with first coordinate $a_n$, we only keep the edge that lies on the left of $L$. We let $E_n$ denote the set of edges thus obtained. We remark that
\labtequ{up left}{$E_n$ must contain an edge $e$ \st\ $S_e$ contains a point $(\zeta,\eta)$ with $\zeta< a_n$ and $\eta>b_n$, }
because some $S_e$ contains $(a_n, b_n)$ as well as such a point (see \fig{path}).

Next, we claim that $E_n$ spans a path of $G_n$. Indeed, we can linearly order the edges $e\in E_n$ by the second coordinates in $S_e$, and note that any two consecutive edges $e_1,e_2$ in this ordering share a vertex $v$, namely one with $h(v)$ coinciding with the vertical coordinate of $S_{e_1} \cap S_{e_2}$. We let $Q_n'$ denote this path.

We take $Q_n=Q_n'$ if the latter happens to start at $z_n$, but this will fail if the interval $I_{z_n}$ is a single point $I_{z_n}=\{ a_n \}$. In this case, we let $Z_n$ be the connected component of $z_n$ in $G_n$ consisting of vertices $u$ such that $I_u$ is trivial, and let $\partial Z_n$ denote the set of those vertices of $G_n\setminus Z_n$ that have a neighbour in $Z_n$. It is not hard to see that $I_u=I_{z_n}$ for every $u\in V(Z_n)$. Moreover, for every vertex $v$ of $\partial Z_n$ we have that $I_v$ is non-trivial and contains $a_n$, hence there is an edge $e$ incident with $v$ such that the square $S_e$ is non-trivial, and contains $s_n(z_n)$ as well as a point with larger second coordinate than $s_n(z_n)$. Now our assumption that no path $P'_n$ as defined in \eqref{P'_n} exists, implies that $Z_n$ lies in $D_z$. Thus for every large enough $n$, $Z_n$ has at least two boundary vertices, and so there is a boundary vertex $q_n$ and an edge $e$ incident with $q_n$ such that $S_e$ is non-trivial, and contains $s_n(z_n)$ as well as a point $(c,d)$ with $c<a_n$ and $d>Im(s_n(z_n))$. Pick now a path $R_n$ in $Z_n\cup \{q_n\}$ connecting $z_n$ to $q_n$. Then $Q_n:= R_n \cup Q_n'$ is a path starting at $z_n$, and we have completed the first half of the definition of $P_n$.

The other half is now easy: we recall that the square tiling of $G_n^*$ with respect to $l_n,r_n$ coincides with that of $G_n$ rotated by $90$ degrees and re-scaled by $1/I^*_n$ by \Lr{rotate}, and repeat the same construction with the role of $z_n$ and $y_n$ interchanged, to obtain the path $Q_n^*\subset G_n^*$ starting at $y_n$.

We claim that $Q_n, Q_n^*$ intersect when viewed as subsets of \OO. Indeed, they both traverse the unique edge $e$ \st\ $S_e$ contains the point $(a_n,b_n)$ as well as a point $(\zeta,\eta)$ with $\zeta>a_n, \eta>b_n$. %and if several such edges happen to exist, they both contain the unique
Therefore, $Q_n \cup Q_n^*$ contains a $z_n$--$y_n$~arc in \OO, which is our $P_n$. Write $L_n$ for the curve in $(0,I)\times (0,1)$ connecting $s_n(z_n)$ to $s_n(y_n)$ that lies in the union of the lines $x=a_n$ and $y=b_n$.

We can assume without loss of generality that in both cases $Q_n$ contains the vertex $z_n$. Consider now a positive integer $m$, and let $\Delta_m$ denote the closed annulus centred at $z$ of radii $1/2m$ and $1/(2m+1)$. Notice that for every $m$ large enough, $\Delta_m$ lies entirely in $D_z$, and $y_n$ lies in the unbounded component of $\C\setminus \Delta_m$. Moreover, for every $n$ large enough, $z_n$ lies in the bounded component of $\C\setminus \Delta_m$. Therefore, for every such $m$ and $n$, there is a point of $Q_n$ contained in $\Delta_m$. Pick such a point and denote it by $x_n(m)$. We can choose $x_n(m)$ to be a vertex of either $G_n$ or $G^*_n$. 

For every fixed $m$, the sequence $(x_n(m))_{n\in\mathbb{N}}$ has an accumulation point in $\Delta_m$, which we denote by $x(m)$. Let $X$ be the set of all $x(m)$. Notice that all $x(m)$ are pairwise distinct, because the annuli $\Delta_m$ are by definition disjoint, and furthermore $x(m)$ converges to $z$ as $m\to \infty$. 

By construction, for every fixed $m$, the values of $s_{k_n}$ at $x_{k_n}(m)$ are close to $f(z)=f(y)$: the points $s_{k_n}(z_{k_n})$ and $s_{k_n}(y_{k_n})$ converge to $f(z)=f(y)$, and the coordinates of every point of $L_{k_n}$ are bounded from above and below by the coordinates of $s_{k_n}(z_{k_n})$ and $s_{k_n}(y_{k_n})$. Therefore, the points of $L_{k_n}$ converge uniformly to $f(z)=f(y)$. Furthermore, the distance between $s_{k_n}(x_{k_n}(m))$ and $L_{k_n}$ converges to $0$ by \Lr{max side}, hence $s_{k_n}(x_{k_n}(m))$ converge to $f(z)=f(y)$. Since $s_{k_n}$ converges uniformly in $\Delta_m$ to $f$, we have that $f(x(m))$ is an accumulation point of the sequence $\big(s_{k_n}(x_{k_n}(m))\big)_{n\in\mathbb{N}}$. Thus, $f(x(m))=f(z)=f(y)$. This proves that $f$ is constant on $X$, as desired.
\end{proof}

\subsection{Behaviour at the designated boundary points}

We will now determine the behaviour of $f$ near $x_1,x_2,x_3$ and $x_4$. The proof of the next lemma is based purely on the boundary behaviour of $f$ at $\partial \Omega \setminus \{x_1,x_2,x_3,x_4\}$, and the fact that $f$ is a conformal map.

\begin{lemma}\label{corners}
For each $i=1,2,3,4$, the only $f$-limit point of $x_i$ is $y_i$.
\end{lemma}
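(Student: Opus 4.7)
I argue for $x_1$; the other corners are analogous by symmetry. The strategy is to identify $f$ with the Riemann map $F : \Omega \to R$, $R := (0, 1/E) \times (0, 1)$, satisfying $F(x_i) = y_i$; the lemma then follows at once from Caratheodory's theorem, which extends $F$ to a homeomorphism $\overline{\Omega} \to \overline{R}$.

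First I would pin down the boundary values of $u = \mathrm{Re}(f)$ and $v = \mathrm{Im}(f)$ on the open boundary arcs. At any $z \in T \setminus \{x_1, x_2\}$, Brownian motion from a nearby $y \in \Omega$ exits $\Omega$ very close to $z$ with high probability, so as $y \to z$ the hitting probabilities satisfy $\Pr_y(\tau_B = \tau_{\partial \Omega}), \Pr_y(\tau_L = \tau_{\partial \Omega}), \Pr_y(\tau_R = \tau_{\partial \Omega}) \to 0$ while $\Pr_y(\tau_T = \tau_{\partial \Omega}) \to 1$. The two-sided bounds in \eqref{ineq} then squeeze $v(y) \to 1$. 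Analogously $v \to 0$ on the interior of $B$, $u \to 0$ on the interior of $L$, and $u \to I$ on the interior of $R$. Combined with the identification $I = 1/E$ (which follows from Duffin's equivalence of effective resistance with discrete extremal length and convergence of the discrete to the continuous extremal length, or is obtained a posteriori from $f = F$), these are precisely the boundary values of $F$.

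Next I would consider the difference $\tilde g := (f - F) \circ F^{-1} : R \to \mathbb{C}$, a bounded holomorphic function on $R$ with $\mathrm{Im}(\tilde g) \to 0$ on the interiors of the horizontal sides $T', B'$ and $\mathrm{Re}(\tilde g) \to 0$ on the interiors of the vertical sides $L', R'$. The heart of the argument is a Schwarz reflection across each of the four sides of $\partial R$. Iterating these reflections tiles $\mathbb{C}$ by reflected copies of $R$ and produces an entire extension $\tilde G$ of $\tilde g$. A direct check, using that composing two reflections across parallel sides is a translation and that the two Schwarz conjugations cancel, shows $\tilde G(z + 2i) = \tilde G(z)$ and $\tilde G(z + 2/E) = \tilde G(z)$. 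Being bi-periodic and entire, $\tilde G$ is bounded on $\mathbb{C}$ and hence constant by Liouville's theorem. Evaluating at $y_1 \in T' \cap L'$, the constant is simultaneously real (as a limit along $T'$) and purely imaginary (as a limit along $L'$), so it must vanish. Therefore $\tilde g \equiv 0$, $f \equiv F$, and Caratheodory's extension of $F$ gives $f(x_i) = y_i$, which is exactly the content of the lemma.

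The main obstacle is justifying Schwarz reflection from the partial continuity we have: the classical principle typically requires continuous boundary extension of the whole function on each side, whereas we only control one of the two harmonic components on each side ($\mathrm{Im}(\tilde g)$ on horizontal sides, $\mathrm{Re}(\tilde g)$ on vertical sides). This is nonetheless enough, since Schwarz's principle applies under the weaker hypothesis that only the vanishing harmonic component extends continuously, with the conjugate component then reconstructed by Poisson integration; this suffices to carry out the iterated reflection and conclude that $\tilde G$ is entire.
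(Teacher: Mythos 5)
Your strategy differs genuinely from the paper's: the paper proves the lemma directly by a length--area argument (setting $g = f\circ\phi$ for a Riemann map $\phi: D\to\Omega$, using $\iint_D |g'|^2 = \mathrm{area}(g(D)) < \infty$ to extract radii $r_k\to 0$ with short images $g(C(r_k))$, which by \Lr{limit points} join $T'$ to $L'$ and so cut off a region near $y_1$ that shrinks to a point, trapping the $f$-limit set of $x_1$), whereas you aim to prove $f \equiv F$ outright by Schwarz reflection and Liouville, from which the lemma is immediate. The reflection idea is sound, but there is a genuine gap at the point where you invoke $I = 1/E$. Both the boundary condition $\mathrm{Re}(\tilde g)\to 0$ on $R'$ and the claimed period relation $\tilde G(z+2/E)=\tilde G(z)$ depend on it, and neither of your two proposed justifications is available here: the convergence of the discrete extremal length to the continuous one is exactly the Corollary the paper derives \emph{from} \Tr{main}, not something established beforehand, and the ``a posteriori from $f = F$'' route is circular since $I=1/E$ is an input to the very Liouville step that is supposed to produce $f = F$. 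What you actually have is $\mathrm{Re}(\tilde g)\to I-1/E$ on $R'$, and the double reflection then gives $\tilde G(z+2/E)=\tilde G(z)+2(I-1/E)$ rather than periodicity.

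The gap is closeable inside your own framework, and doing so strengthens the argument: from $\tilde G(z+2i)=\tilde G(z)$ and $\tilde G(z+2/E)=\tilde G(z)+2(I-1/E)$, the function $\tilde G$ is bounded on the strip $\{0\le \mathrm{Im}\, z\le 2\}$ (a finite union of reflected copies of the bounded $\tilde g$), hence grows at most linearly by the quasi-periodicity, so $\tilde G(z)=az+b$; the period $2i$ forces $a=0$, whence $\tilde G$ is constant and $2(I-1/E)=0$, so $I=1/E$ falls out for free; the constant $b$ then vanishes exactly as you argue, using that $\mathrm{Im}\,\tilde G=0$ on $T'$ and $\mathrm{Re}\,\tilde G=0$ on $L'$. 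Two technical loose ends you should make explicit in a full write-up: the iterated reflection is defined only away from the orbit of the four corners, and you must observe that these points are removable singularities of the locally bounded $\tilde G$; and the variant of the reflection principle you invoke---boundedness of $\tilde g$ together with control of only one harmonic component on each side---is correct but not the textbook statement, so it merits a reference or a short uniqueness/Poisson argument.
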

\begin{proof}
Let us assume without loss of generality that $i=1$. Consider a Riemann map $\phi$ from the open unit disk $D$ onto $\Omega$, and recall that $\phi$ extends to a homeomorphism between their closures $\overline{D}$ and $\overline{\Omega}$ by Caratheodory's theorem. Let $X_1=\phi^{-1}(x_1)$, and define $g=f\circ \phi$, which is a conformal map, as $f$ is conformal by \Lr{holomorphic} and \Lr{injective}. For each $r>0$, consider the curve $C(r)=D\cap \{z\in \mathbb{C} \mid \abs{z-X_1}=r\}$, and let $l(r)$ be the length of the curve $g\big(C(r)\big)$. Write $t(r)$ for the set $\{t\in [0,2\pi] \mid X_1+re^{it}\in D\}$ on which the standard parametrization $X_1+re^{it}$ of $C(r)$ is defined. Since $g$ is a conformal map, the integral $$\int_{0}^{1/2} \dfrac{l(r)^2}{r} dr$$ is finite. This follows from applying the Cauchy-Schwarz inequality to the formula $$l(r)=\int_{t(r)} |g'(X_1+re^{it})|rdt,$$ and then using the fact that
$$\iint_{D} |g'(z)|^2 dxdy=\text{area}(g(D)),$$
which is finite; see e.g. \cite[Lemma 5.1.3]{Krantz} for a detailed proof. On the other hand, the function $1/r$ is not integrable at $0$, hence there is a sequence $(r_k)$ of strictly positive real numbers converging to $0$, such that the sequence $\big(l(r_k)\big)$ converges to $0$ as well.

For every $k$ large enough, $l(r_k)$ is in particular finite, implying that $g(C(r_k))$ extends to a continuous curve $\gamma_k$ defined on the closure of $t(r_k)$, which clearly has the same length as $g(C(r_k))$. Furthermore, for every $k$ large enough, one of the two endpoints of $C(r_k)$ lies in $\phi^{-1}(T)$, while the other lies in $\phi^{-1}(L)$. Hence one of the two endpoints of $\gamma_k$ lies in $T'$, while the other lies in $L'$ by \Lr{limit points}. Notice that the endpoints of $\gamma_k$ may possibly coincide, in which case they coincide with $y_1$, but the curve is otherwise injective. Consequently, $\gamma_k$ is either a Jordan arc or a Jordan curve. In both cases, $\gamma_k$ divides $(0,I)\times (0,1)$ into two components $S_1$ and $S_2$. We claim that one of them, say $S_1$, decreases to the empty set as $k\to\infty$. Indeed, if $\gamma_k$ is a Jordan curve, then we let $S_1$ be the component bounded by $\gamma_k$, and otherwise we let $S_1$ be the component whose boundary contains $y_1$. Since the length of $\gamma_k$ converges to $0$, the distance of its endpoints converges to $0$ as well, which is possible only when the endpoints of $\gamma_k$ converge to $y_1$. In both cases, the distance of each of the boundary points of $S_1$ from $y_1$ converges uniformly to $0$. Our claim now follows easily.

Clearly, $C(r_k)$ divides $D$ into two components $C_1$ and $C_2$ as well, with $C_1$ decreasing to the empty set and $C_2$ increasing to $D$ as $k\to\infty$. Since $g$ is injective, one of the sets $g(C_1)$, $g(C_2)$ lies in $S_1$, while the other lies in $S_2$. To decide which one lies in $S_1$, notice that $g(C_2)$ increases to $g(D)$. Therefore, $g(C_2)$ cannot lie in $S_1$ when $k$ is large enough, hence $g(C_1)$ lies in $S_1$ for those $k$. This implies that all possible $g$-limit points of $X_1$, hence all possible $f$-limit points of $x_1$, belong to the closure of $S_1$. The closure of $S_1$ decreases to $\{y_1\}$ as we have seen, and the desired assertion follows.
\end{proof}

\subsection{Completing the proof}

We are now ready to prove \Tr{main}.

\begin{proof}[Proof of \Tr{main}]
It follows from \Lr{holomorphic} and \Lr{injective} that $f$ is a conformal map. Moreover, $f$ maps open sets to open sets by the Open mapping theorem for holomorphic functions. This shows that $f(\Omega)$ is an open set, and furthermore the only boundary points of $f(\Omega)$ are the $f$-limit points of $\partial \Omega$. We can now deduce from \Lr{limit points} and \Lr{corners} that the boundary of $f(\Omega)$ lies at the boundary of the rectangle $[0,I]\times [0,1]$. The set $f(\Omega)$ lies in $[0,I]\times [0,1]$, because $s_n(\Omega)$ lies in $[0,I^*_n]\times [0,1]$ for every $n\geq 0$, and $s_{k_n}$ converges to $f$ by \Lr{s conv}. There is a unique set satisfying the aforementioned properties of $f(\Omega)$, namely $(0,I)\times (0,1)$. Therefore, $f$ maps $\Omega$ onto $(0,I)\times (0,1)$. 

Since $\partial \Omega$ is a Jordan curve, $f$ extends to a homeomorphism between $\overline{\Omega}$ and $[0,I]\times [0,1]$ by Caratheodory's theorem, and maps $x_i$, $i=1,2,3,4$ to $y_i$ by \Lr{corners}. As mentioned in \Sr{complex}, $I$ equals the reciprocal of the extremal length between $T$ and $B$, and the above properties uniquely determine $f$. Consequently, all subsequential limits of $s_n$ coincide with $f$. Hence $s_n$ converges in $C^{\infty}(\Omega)$ to $f$, as desired.
\end{proof}

Having proved the convergence of $s_n$ to $f$ we can now strengthen the statement of \Lr{max side}, namely we can prove that the maximum side length of a square $S_e$ over all squares of $S$ converges to $0$. Indeed, for those edges lying in some compact subset $K$ of $\Omega$, the convergence to $0$ follows from \Lr{max side}. For those edges lying in $\Omega\setminus K$ we can use the fact that for every edge $e$ of our graph, $w_n(e)^2$ is equal to the area of the square $S_e$, to obtain that the sum $\sum_{e \subset \Omega\setminus K} w_n(e)^2$ coincides with the area of the union of the corresponding squares. Choosing $K$ appropriately we can ensure that $f$ maps $\Omega\setminus K$ so close to the boundary that the area of $f(\Omega\setminus K)$ is as small as we want. Arguing as in the proof of \Lr{local}, and using the uniform convergence of $s_n$ to $f$ on $K$ and the convergence of $I^*_n$ to $I$, we can deduce that the area of $s_n(\Omega\setminus K)$ is as small as we want for every $n$ large enough. This easily proves the desired convergence of the maximum side length to $0$. The details are left to the reader. \\

Since the only limit point of $(I^*_n)$ is $I$, we obtain that $I^*_n$ converges to $I$. Moreover, $I$ is the reciprocal of the extremal length between $T$ and $B$ by the discussion in \Sr{complex}, and $R^{eff}_n=1/I^*_n$. As a corollary we obtain

\begin{corollary}
The effective resistance $R^{eff}_n$ between $T_n$ and $B_n$ converges to the extremal length between $T$ and $B$.
\end{corollary}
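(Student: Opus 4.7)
The plan is to observe that this corollary is essentially a direct consequence of what has already been established: it just packages together \eqref{eff-int}, the convergence $I^*_n \to I$ noted in the paragraph immediately preceding the statement, and the identification of $I$ with the reciprocal of an extremal length coming from \Tr{main}.

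First I would recall that by \eqref{eff-int}, for every $n$ we have the identity $R^{\mathrm{eff}}_n = 1/I^*_n$, so it suffices to determine the limit of $I^*_n$. The sentence preceding the corollary already observes that $I^*_n \to I$: indeed, \Lr{bounded} shows that the sequence $(I^*_n)$ is bounded, so every subsequence has a further convergent subsequence; along any such further subsequence one can extract, via \Lr{conv}, functions $u,v$ and a limit value of $I^*_n$ that together give rise via $f = u+iv$ to the conformal map of \Tr{main}. By \Tr{main}, the target rectangle is uniquely $(0,1/E)\times (0,1)$, where $E$ is the extremal length between $T$ and $B$. Consequently every subsequential limit of $(I^*_n)$ equals $1/E$, which forces convergence of the whole sequence: $I^*_n \to 1/E$.

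The identification $I = 1/E$ then comes from the discussion in \Sr{complex}: the extremal length between the two long sides $[0,I]\times\{0\}$ and $[0,I]\times\{1\}$ of the rectangle $(0,I)\times(0,1)$ equals $1/I$, and extremal length is conformally invariant under the map $f$ from $\OO$ onto $(0,I)\times (0,1)$ supplied by \Tr{main}. Hence the extremal length between $T$ and $B$ in $\OO$ equals $1/I$, so $I = 1/E$.

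Combining these facts gives $R^{\mathrm{eff}}_n = 1/I^*_n \to 1/I = E$, which is exactly the claim. There is no real obstacle here: all the hard work (the boundedness estimate \Lr{bounded}, the $C^\infty$-convergence in \Lr{conv}, the identification of the limit map in \Tr{main}, and the conformal invariance of extremal length) has already been done. The only thing worth being careful about is making the logical order clear: one needs \Tr{main} to pin down the \emph{unique} subsequential limit of $I^*_n$ before one can upgrade subsequential convergence to full convergence, and one uses conformal invariance of extremal length to convert the geometric width $I$ of the target rectangle into the extremal length in $\Omega$.
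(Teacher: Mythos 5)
Your proposal is correct and takes essentially the same route as the paper: the paper's argument is precisely the compressed version of what you wrote, namely that $R^{\text{eff}}_n = 1/I^*_n$ by \eqref{eff-int}, that \Tr{main} (via \Lr{bounded} and \Lr{conv}) forces every subsequential limit of $(I^*_n)$ to equal $I = 1/E$ so that $I^*_n \to I$, and that $I$ is the reciprocal of the extremal length between $T$ and $B$ by conformal invariance as discussed in \Sr{complex}. Your write-up is a slightly more explicit unpacking of the same logic, with no genuine difference in approach.
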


\section*{Acknowledgements}
We would like to thank Gr{\'e}gory Miermont for introducing the problem, and Nicolas Curien for a helpful discussion.

\bibliographystyle{plain}
\bibliography{collective}

\begin{thebibliography}{10}

\bibitem{AhlConformal}
Lars~Valerian Ahlfors.
\newblock {\em Conformal invariants: topics in geometric function theory}.
\newblock McGraw-Hill, 1973.

\bibitem{BeSchrHar}
I.~Benjamini and O.~Schramm.
\newblock Random walks and harmonic functions on infinite planar graphs using
  square tilings.
\newblock {\em The Annals of Probability}, 24(3):1219--1238, 1996.

\bibitem{Bill}
Patrick Billingsley.
\newblock {\em Convergence of probability measures}.
\newblock Wiley, 1999.

\bibitem{Brandt}
A.~Brandt.
\newblock {Estimates for difference quotients of solutions of Poisson type
  difference equations}.
\newblock {\em Mathematics of Computation}, 20(96):473--499, 1966.

\bibitem{SqTil}
R.~L. Brooks, C.~A.~B. Smith, A.~H. Stone, and W.~T. Tutte.
\newblock The dissection of rectangles into squares.
\newblock {\em Duke Mathematical Journal}, 7(1):312--340, 1940.

\bibitem{RBMWeak}
K.~Burdzy and Z.~Chen.
\newblock Weak convergence of reflected brownian motions.
\newblock {\em Electronic Communications in Probability}, 3(4):29--33, 1998.

\bibitem{CaFlPaSqu}
J.~W. Cannon, W.~J. Floyd, and W.~R. Parry.
\newblock {Squaring rectangles: The finite Riemann mapping theorem.}
\newblock {Abikoff, William (ed.), The mathematical legacy of Wilhelm Magnus.
  Contemp.\ Math.\ 169}, 1994.

\bibitem{ChelSmi}
D.~Chelkak and S.~Smirnov.
\newblock Discrete complex analysis on isoradial graphs.
\newblock {\em Advances in Mathematics}, 228(3):1590--1630, 2011.

\bibitem{ColSteCir}
C.~R. Collins and K.~Stephenson.
\newblock {A circle packing algorithm}.
\newblock {\em Computational Geometry}, 25(3):233--256, 2003.

\bibitem{Courant}
R.~Courant, K.~Friedrichs, and H.~Lewy.
\newblock {{\"U}ber die partiellen Differenzengleichungen der mathematischen
  Physik}.
\newblock {\em Mathematische annalen}, 100(1):32--74, 1928.

\bibitem{DHeRodin}
P.~Doyle, Z.~X. He, and B.~Rodin.
\newblock Second derivatives of circle packings and conformal mappings.
\newblock {\em Discrete \& Computational Geometry}, 11(1):35--49, 1994.

\bibitem{DuffinEx}
R.~J. Duffin.
\newblock The extremal length of a network.
\newblock {\em Journal of Mathematical Analysis and Applications},
  5(2):200--215, 1962.

\bibitem{planarPB}
A.~Georgakopoulos.
\newblock {The boundary of a square tiling of a graph coincides with the
  Poisson boundary}.
\newblock {\em Inventiones mathematicae}, 203(3):773--821, 2016.

\bibitem{HeRodin}
Z.~X. He and B.~Rodin.
\newblock {Convergence of circle packings of finite valence to Riemann
  mappings}.
\newblock {\em Communications in Analysis and Geometry}, 1(1):31--41, 1993.

\bibitem{HeSchRie}
Z.~X. He and O.~Schramm.
\newblock {On the convergence of circle packings to the Riemann map}.
\newblock {\em Inventiones mathematicae}, 125(2):285--305, 1996.

\bibitem{HeSchInf}
Z.~X. He and O.~Schramm.
\newblock {The $C^\infty$-convergence of hexagonal disk packings to the Riemann
  map}.
\newblock {\em Acta Mathematica}, 180(2):219--245, 1998.

\bibitem{KaraSh}
Ioannis Karatzas and Steven Shreve.
\newblock {\em Brownian Motion and Stochastic Calculus}.
\newblock Springer, 1991.

\bibitem{Krantz}
Steven~G. Krantz.
\newblock {\em Geometric function theory: explorations in complex analysis}.
\newblock 2006.

\bibitem{LawEst}
G.~F. Lawler.
\newblock {Estimates for differences and Harnack inequality for difference
  operators coming from random walks with symmetric, spatially inhomogeneous,
  increments}.
\newblock {\em Proceedings of the London Mathematical Society}, 3(3):552--568,
  1991.

\bibitem{LF}
Jacqueline Lelong-Ferrand.
\newblock {\em {Repr{\'e}sentation conforme et transformations {\`a}
  int{\'e}grale de Dirichlet born{\'e}e}}, volume~22.
\newblock Gauthier-Villars, 1955.

\bibitem{MorPer}
Peter M{\"o}rters and Yuval Peres.
\newblock {\em Brownian motion}.
\newblock Cambridge University Press, 2010.

\bibitem{RBMConf}
M.~Pascu.
\newblock {Scaling coupling of reflecting Brownian motions and the hot spots
  problem}.
\newblock {\em Transactions of the American Mathematical Society},
  354(11):4681--4702, 2002.

\bibitem{Pom}
Christian Pommerenke.
\newblock {\em {Boundary Behaviour of Conformal Maps}}.
\newblock Springer-Verlag, 1992.

\bibitem{RodSu}
B.~Rodin and D.~Sullivan.
\newblock {The convergence of circle packings to the Riemann mapping}.
\newblock {\em Journal of Differential Geometry}, 26(2):349--360, 1987.

\bibitem{BabyRudin}
Walter Rudin.
\newblock {\em {Principles of mathematical analysis}}.
\newblock McGraw-Hill, 1964.

\bibitem{PapaRudin}
Walter Rudin.
\newblock {\em {Real and Complex analysis}}.
\newblock McGraw-Hill, 1987.

\bibitem{Steph}
K.~Stephenson.
\newblock {A probabilistic proof of Thurston's conjecture on circle packings}.
\newblock {\em Rendiconti del Seminario Matematico e Fisico di Milano},
  66(1):201--291, 1996.

\bibitem{ThuFin}
W.~Thurston.
\newblock {The finite Riemann mapping theorem}.
\newblock In {\em Invited talk, An International Symposium at Purdue University
  on the occasion of the proof of the Bieberbach conjecture}, 1987.

\end{thebibliography}

\end{document}